\documentclass[review]{elsarticle}

\usepackage{amsmath,amsthm,amssymb}
\usepackage{array}  
\usepackage{algorithm}
\usepackage{algpseudocode}
\usepackage{booktabs}

\usepackage{caption}
\usepackage{subcaption}
%
%
\usepackage{graphicx} 
\graphicspath{{./figures/}}
\usepackage{makecell}
\usepackage{multirow}

\usepackage{xcolor}
\usepackage{lineno}
\usepackage[numbers]{natbib}



\usepackage{amsmath,amsthm}
\usepackage{amssymb}
\usepackage{amsfonts}
\usepackage{bm}
\usepackage[bookmarks=true, hidelinks]{hyperref}
\hypersetup{pdfauthor={Name}}


\usepackage{caption}

\usepackage{tikz}
\usetikzlibrary{calc,fadings,decorations.pathreplacing}

\usepackage[skins,theorems,many]{tcolorbox}
\tcbset{highlight math style={enhanced,
colframe=red,colback=white,arc=0pt,boxrule=1pt}}

\usepackage[english]{babel}
\usepackage{blindtext,tikz}
\usetikzlibrary{calc}
\usepackage{fancyhdr}
\usepackage{indentfirst}
\usepackage{color}

\usepackage{listings}



\usepackage{url}
\usepackage[numbers]{natbib}

\theoremstyle{definition}
 \newtheorem{thm}{Theorem}[section]

\newtheorem{rem}[thm]{Remark}
\newtheorem{exm}[thm]{Example}

%
\newcommand{\Real}{\mathbb{R}}

\newcommand{\abs}[1]{\left\vert#1\right\vert}

\newcommand{\set}[1]{\left\{ #1\right\}}

\newcommand{\cK}{\mathcal{K}}




\newcommand{\rone}[1]
{{\color{black} #1}}

\newcommand{\zz}[1]{{\color{black} #1}}
\newcommand{\tw}[1]
{{\color{black} #1}}
\usepackage[letterpaper,margin=1.0in,bottom=1.1in]{geometry}

 \journal{}

 \allowdisplaybreaks[2]
\begin{document}
\begin{frontmatter}

\title{Tensor neural networks for high-dimensional Fokker-Planck equations}

\address[zz]{Department of Mathematical Sciences, Worcester Polytechnic Institute, Worcester, MA, USA}

\address[zh]{Department of Computer Science, National University of Singapore, Singapore, 119077}
\address[gk]{Division of Applied Mathematics, Brown University, Providence, RI 02912, USA} 
\address[gk1]{Advanced Computing, Mathematics and Data Division, Pacific Northwest National Laboratory, Richland, WA, United States}
\author[zz]{Taorui Wang} \ead{twang13@wpi.edu}
\author[zh]{Zheyuan Hu}   \ead{e0792494@u.nus.edu}
\author[zh]{Kenji Kawaguchi} \ead{kenji@nus.edu.sg}
\author[zz]{Zhongqiang Zhang\corref{cor1}} \ead{zzhang7@wpi.edu}
\author[gk,gk1]{George Em Karniadakis}\ead{george\_karniadakis@brown.edu}


\date{\today}


\begin{abstract}
   We solve high-dimensional steady-state Fokker-Planck equations on the whole space by applying tensor neural networks. The tensor networks are \zz{a linear combination of} tensor product\zz{s} of one-dimensional feedforward networks or a linear combination of several selected radial basis functions.  
   The use of tensor feedforward networks allows us to efficiently exploit
   auto-differentiation \zz{(in physical variables)} in major Python packages while using radial basis functions can fully avoid auto-differentiation, which is rather expensive in high dimensions. 
   We then use the physics-informed neural networks and stochastic gradient descent methods to learn the tensor networks. One essential step is to determine a proper bounded domain or numerical support for the Fokker-Planck equation.
   To better train the tensor radial basis function networks, we impose some constraints on parameters, which lead to relatively high accuracy. 
  We demonstrate numerically that the tensor neural networks in physics-informed machine learning are efficient for steady-state  Fokker-Planck equations from two to ten dimensions.  
\end{abstract}

 \begin{keyword}
  physics-informed neural networks, training kernel methods, high-dimensional approximation, density estimation
\end{keyword}
\end{frontmatter}

\section{Introduction}
Fokker-Planck equations are often used to describe probability density functions for dynamical systems perturbed by random noise, such as in statistical mechanics, biology, finance, and engineering. 
Solutions to Fokker-Planck equations are often obtained numerically; see e.g. in  \cite{Risken96-book}.  
However, numerical methods for 
 Fokker-Planck equations often require to address the following issues: 
  approximation in unbounded domains and high dimensions.
  Thus, solving the Fokker-Planck equation can be challenging with grid-based numerical methods such as finite difference or finite element methods, especially in high dimensions. 

Instead of grid-based methods,  mesh-free numerical methods via neural networks such as physics-informed neural networks \cite{raissi2019physics} may be applied to alleviate these difficulties in high dimensions.
The main idea is to formulate the problem of solving partial differential equations (PDEs) as solving optimization problems
 while using neural networks as approximate solutions.
 Some popular choices of \zz{formulations} are using the L2-norm of the residuals of 
 the underlying PDEs (known as physics-informed neural networks, PINNs \cite{raissi2019physics}) or using the variational formulation, e.g., \cite{JordanKD1998variationalFP} and \cite{Liu22NeuralParametricFP}.  
%
  The derivatives in the loss are usually computed via auto-differentiation and back-propagation for deep neural networks. The loss functions are realized by sampling random collocation/training points. 
  This leads to discrete loss functions for optimization over the parameters of deep neural networks, which are often trained/optimized with stochastic gradient methods such as ADAM methods \cite{kingma2014adam}. This approach has been successfully applied to many problems; see e.g., in the reviews \cite{EHanJ22review,karniadakis2021physics}  and many related works. 

The challenges of machine learning algorithms for high-dimension PDEs lie in many aspects, such as 
\textit{
1) effective loss functions,  
2) computing derivatives inexpensively, 3) effective training points, and 4) efficient training methods.} 
These issues are somewhat addressed in the literature, see Section \ref{sec:issue-sml}. 
However, some important issues have not yet been explored. 
For example, for high dimensional PDEs, solutions may be small in most regions where solutions are still meaningful.  Resolving such small but meaningful scales can be challenging. A simple example is the computation of the density function
$(2\pi)^{-\frac{d}{2}}\exp (-\frac{\abs{x}^2}{2})$. When $\abs{x}=1/2$ and $d=10$, the function values are about $10^{-4}$. 
Another issue in high dimensional problems is the lack of effective benchmark problems/solutions to test. For example, 
in standard tests for high dimensional integration  \cite{genz84}, the functions are usually tensor-product functions or composition of simple functions with a weighted summation of inputs or quadratic functions of the input arguments. To the best of our knowledge, the target
solutions/functions considered in solving high-dimensional PDEs via networks are simple and most are radial basis functions. These benchmark problems effectively test the algorithms but do not address the approximation issues in high dimensions.

There have been a few works on machine learning algorithms for Fokker-Planck equations, see Section \ref{sec:fokker-planck-nn}. 
However, in addition to the issues above, the following challenges have not been addressed for Fokker-Planck equations, including 
\textit{a)
finding effective numerical supports or picking the computational domains;
b) the design of the networks for large approximation capacity \rone{ with efficiency}; and  
c) numerical integration in high dimensions crucial for numerical probability density function, whose integral must be close to 1.} 

\subsection{\rone{Our proposed methodology and contributions}}
In this work, we employ the framework of
physics-informed neural networks for Fokker-Planck equations as in 
\cite{alhussein2023physics,Dobson22-fokkerplanck,zhai2022DL4FP}
using tensor neural networks. However, 
the loss function herein is different from these works. Specifically, we use the loss function \eqref{eqn: Loss}, where the integral of the solution being 1 is implemented via normalization instead of penalization. 
Compared to \cite{Dobson22-fokkerplanck,zhai2022DL4FP}, where a large amount of data is generated from the Monte-Carlo simulations of stochastic differential equations (SDEs), we use only little data before the training.  
Specifically, we select numerical supports from a few simulated trajectories of corresponding SDEs and find this selection addresses the issue a) above in 6 dimensions. 
This step is crucial as numerical supports determine the computational domain and highly
affect the number of training points and thus the computational cost. 

To address the issue b), we use the tensor neural networks: tensor radial basis function networks and tensor feedforward networks for steady-state Fokker-Planck equations 
in high dimensions (from 2 to 10 dimensions). 
Both networks are universal \zz{approximators}, i.e., they can approximate continuous functions on \zz{bounded domains} arbitrarily well; see \ref{app:universal}.
The tensor feedforward neural networks have been proposed for low-dimensional problems, e.g., in \cite{cho2022separable,wang2022tensor, wang2022solving,zhang2023low-rank} but have not been tested in high dimensions. 
Also, tensor radial basis function networks 
have not been investigated for high dimensional PDEs, especially for Fokker-Planck equations.  

The use of tensor radial basis function networks in 
high dimensions can be advantageous. 
We can avoid auto-differentiation in computing the derivatives of each term in loss functions, \zz{ in both physical variables and parameters of radial basis function networks}. In fact, due to the structure of tensor radial basis function networks,
the derivatives in the spatial variables and 
the parameters of the networks are simple to compute and thus save huge storage required by the prevalent
auto-differentiation Python packages.  
Compared to tensor feedforward neural networks, training tensor radial basis function networks is more demanding. 
We propose also an effective initialization and impose constraints for different {parameters} for the tensor radial basis function networks 
and provide a pragmatic approach for efficient approximations in high dimensions.  Furthermore, we compare these two types of networks in Section \ref{sec:numerical}.

The issue c) can be well accommodated by 
tensor radial basis function networks as it is straightforward to compute the integration. 
For tensor feedforward neural networks, we use accurate and reliable quadrature rules for numerical integration - Gauss quadrature is accurate for smooth integrands while Monte Carlo may be universal but less accurate.

In addition, we address the issue of approximation of complicated functions in high dimensions, which has not been investigated thoroughly in the literature.  While the target solutions/functions in high dimensions are simple, e.g. linear or radial basis functions, our solutions are not Gaussian or Gaussian mixture, and they have complicated interactions among different coordinates and multiple modes. 
Here we assume that solutions are negligible outside of the numerical supports but involve non-trivial interactions among directions/arguments/dimensions. 
We observe that in high dimensions,
the training results are sensitive to the numerical support; see Example \ref{exm:10d-multi-mode} where smaller numerical supports \zz{may be needed for efficiency and accuracy} in ten dimensions. 
%
We also introduce a metric for measuring the approximation errors in high dimensions while focusing on high-probability regions. 

\rone{{We summarize below our contributions and novelty of this work. 
\begin{itemize}
\item 
We develop an algorithm to estimate a proper bounded domain (numerical support) for efficient computation. To our knowledge, these domains are often determined arbitrarily in literature for Fokker-Planck equations. 
When computational bounded domains are not used, a large amount of data from stochastic differential equations is used, e.g. in \cite{Dobson22-fokkerplanck,zhai2022DL4FP}. In this work, we only need a small amount of data to determine the support, and the determination procedure is offline while no data is used in the later training process.  This aspect distinguishes this work from other works in the literature mentioned above and in the next section.

\item  Density solutions for Fokker-Planck equations are guaranteed using tensor radial basis neural networks. The normalization constant for the density function is simple to compute due to the tensor-product in the tensor neural networks. 
For tensor feedforward neural networks, we use high-order integration or analytical methods in each dimension and the network solutions are very close to density solutions - the integration is sufficiently close to 1.
In contrast, feedforward neural networks in high dimensions are accompanied by the Monte Carlo methods or quasi-Monte Carlo methods when the high-dimensional integral is computed.
Thus the network solution may become less inaccurate.  Our numerical examples show that the inaccuracy may lead to large errors or even failure in high dimensions.  

\item  We demonstrate through several examples that tensor neural networks are efficient for high-dimensional (two- to ten-dimensional) Fokker-Planck equations.  Extensive experiments in several dimensions and ablation studies are presented to show the efficiency of the proposed methodology.  
\end{itemize}
}
} 

This paper is organized as follows.  In Section \ref{sec:literature}, we summarize works on high-dimensional PDEs and those focusing on Fokker-Planck equations. 
We present in Section \ref{sec:methods-set-up} the problem of interest and the methodology in this work.
In Section \ref{sec:numerical}, we present five examples in several dimensions. We also compare tensor radial basis function networks and tensor feedforward networks in the first three examples. 
\rone{A comparison study and ablation studies are presented in Section \ref{sec:ablation}.}
We summarize our work in Section \ref{sec:summary} and discuss some limitations of the work and possible directions to be addressed in the future.

%
\section{Literature review}\label{sec:literature}
In this section, we review first some general issues and related literature in machine learning methods for high dimensional PDEs and then discuss specific problems in methods for Fokker-Planck equations. 
Here we do not intend to have a complete list of related works as it is almost impossible due to the fast development in literature. Instead, we focus on some fundamental issues of interest to our work. 
\subsection{General issues in high dimensions}
\label{sec:issue-sml}
Some challenges stated above for high dimensional problems have been  addressed in the literature.  

For effective loss function, we may use 
the combination of the learning approach and the probabilistic representation  (e.g., Feynman-Kac) of  the solutions, i.e., representation   using solutions  stochastic differential equations, see e.g.     DeepBSDE  \cite{han2018solving, han2017deep} and  many works along this direction  
\cite{beck2019machine,beck2020overcoming_ac,beck2021deep,becker2021solving,chan2019machine,henry2017deep,hure2020deep,hutzenthaler2020overcoming,ji2020three,raissi2018forward,zhang2022predictorcorrector} and \cite{EHanJ22review} for a review. 
We remark that this approach addresses only local approximations such as solutions at one or a few points and may require extensive computations for solutions in a large region.

 For computing derivatives inexpensively, several approaches have been applied for feedforward neural networks: 
\begin{itemize}
    \item a)  Randomized calculus for  \cite{he2023learning}, by replacing the approximate solution and its derivatives with their stochastic approximation by Stein's identity.
    The replacement leads to no auto-differentiation and no back-propagation for derivatives in physical variables.
   This methodology may be used in higher dimensions but is limited to no larger than $4$ dimensions  \cite{he2023learning} \zz{ when solutions are nontrivial}. {\color{black} Randomized calculus uses approximation for differentiation, thus introducing extra approximation and requiring extra memory. 
      }
    \item b) 
  Tensor neural networks for low dimensional partial differential equations. Wang et al. \cite{wang2022tensor, wang2022solving} propose tensor neural networks for solving  Schr\"odinger equations. \cite{cho2022separable} applied the so-called separable PINNs for PDEs. In \cite{zhang2023low-rank},  a low-rank neural network is applied to solve PDEs. The networks therein are of the form 
$\sum_{l=1}^r \bigotimes_{j=1}^d NN_j^{l}$ where $NN_j^{l}$'s are neural networks of one dimension.  \textcolor{black}{Here $r$ is the rank, $l$ is the index of rank, $d$ is the input dimension and $\bigotimes$  means tensor product}. In these papers, problems in up to four dimensions are considered.
We will use tensor neural networks for higher dimensional problems.
\item 
c) A random batch method in dimensionality has been developed in \cite{hu2023tackling}. Therein, the computational cost of derivatives of feed-forward neural networks consists of
computing the derivative in a few randomly picked dimensions and repeating the process. 
\end{itemize}

As an alternative to feed-forward neural networks, radial basis function or more generally kernel function networks such as additive kernels or mixtures of kernels have been applied to accommodate the complex structures in the underlying problems and thus increase the capacity of the kernel approximations.    
For example,  additive kernels are used to solve low-dimensional PDEs, e.g. in \cite{mishra2018hybrid, MisFS19,SenvA22}.
They are also widely used in machine learning algorithms, e.g., \cite{baek_new_2019}
in support vector machines. 
An extension of additive kernels is known as multiple kernels in multiple kernel learning where a linear or non-linear combination of base kernels is used.  Many algorithms of multiple kernel learning for large-scale problems have been proposed to reduce computational complexity and memory usage, e.g.,  in \cite{AIOLLI2015215,jain2012spf,kloft2011lp,moeller2014geometric,orabona2011ultra,JMLR:v9:rakotomamonjy08a,sonnenburg2006large,varma2009more}.  
It has been shown recently that 
additive kernels can learn complex dynamics, e.g. \cite{OwhYoo19,BouOwh21,ChenOS21,YooOwh20}.
{\color{black} In Section
\ref{sec:ablation}, we will compare the performance of one popular optimizer developed for multiple kernel learning \cite{JMLR:v12:gonen11a} and stochastic gradient descent methods, e.g., ADAM \cite{kingma2014adam} and LION \cite{chen2024symbolic}.}

\subsection{Solving Fokker-Planck equations with neural networks} \label{sec:fokker-planck-nn}

The methods above may be applied to general PDEs and are not particularly designed for Fokker-Planck equations. Below we present a compact review of some recent works on 
numerical Fokker-Planck equations via neural networks. 

The formulation of physics-informed neural networks for Fokker-Planck has been used, e.g.,  in \cite{alhussein2023physics,Dobson22-fokkerplanck,zhai2022DL4FP}. 
Choices of loss functions other than the least-square formulation in physics-informed neural networks are also made. 
In 
\cite{Liu22NeuralParametricFP}, the loss function  is not the $L2$-norm of residual of PDEs but a variational formulation as in \cite{JordanKD1998variationalFP}. 
Also, 
the high dimension equation is transformed into a system of finite dimensional ordinary differential equations in \cite{Liu22NeuralParametricFP}.

In addition to feedforward neural networks, generative models are also applied for PDEs with density solutions, especially for approximating density functions.
See e.g., \cite{boffi2023probability,Liu22NeuralParametricFP,tang2022adaptive}
for the use of flow-based networks, 
\cite{anderson2023fisher,Tabandeh22FPmixtures} for Gaussian mixtures. 

If observation data is available, data can be incorporated into loss functions. For example,  the authors in  \cite{Dobson22-fokkerplanck,zhai2022DL4FP} use sampled solutions of the corresponding SDEs by Monte Carlo simulation as training points and also in data loss. For time-dependent Fokker-Planck equations, it is remarked in \cite{Li2023time-dependentFP} that concentrating the training at the initial and the terminal time can improve the training efficiency. 
In  \cite{Lin2022invariant-distribution,Lin2023invariant-distribution}, the goal is to compute generalized potential via given data, i.e  
the logarithm of the invariant distribution for time-independent Fokker-Planck. The logarithmic transformation of the solution is also applied in  \cite{mandal2023learning}  for
time-dependent Fokker-Planck equations. 
In \cite{mandal2023learning}, the long short-term memory model is used with physics-informed neural networks to solve the time-dependent Fokker-Planck equations. 
With a large amount of data generated by an SDE,  \cite{GuHLY23stationary} aims to find accurate density functions by first estimating the drift and diffusion coefficients of the SDE and then solving the
Fokker-Planck equation with estimated coefficients. 


Radial Basis Function (RBF) methods have been utilized to solve PDEs, as noted in \cite{870037}. 
Techniques such as Kansa's approach, Hermite interpolation, and the Galerkin method have been utilized to implement these RBFs in solving the Fokker-Planck Equation, as detailed in \cite{KAZEM2012181} and \cite{DEHGHAN201438}. Furthermore, \cite{WANG2023103408} employs the RBF neural network, specifically using the Gaussian RBF function and normalization, to solve the steady-state Fokker-Planck equation. However, these works address only problems {\em in low dimensions}.

\section{\zz{Problem and methodology}}\label{sec:methods-set-up}

Consider the following Fokker-Planck equation
\begin{equation}\label{eq:fokker-planck-steady}
\mathcal{L}p = - \sum_{i = 1}^d\frac{\partial }{\partial x_i}\left( f_i p \right)(\boldsymbol{x}) + \frac{1}{2} \sum_{i, j} \frac{\partial^2 }{\partial x_i \partial x_j} \left( D_{ij} p \right)(\boldsymbol{x})=0,\quad \boldsymbol{x}= (x_{1},\ldots,x_{d}) \in  \Real^d,
\end{equation}
with $\displaystyle \int_{\Real^d} p(x) dx = 1, p\geq 0 $.
We are limited to a class of Fokker-Planck equations, whose solution serves as the unique invariant distribution of   \zz{ the stochastic differential equation (SDE) \eqref{eqn:fokker-planck-sde}. Let $W_t$ be a $n$-dimensional Brownian motion and $\boldsymbol{\sigma}\in \Real^{d\times n}$. The corresponding SDE is formulated as follows}. 
\begin{equation}\label{eqn:fokker-planck-sde}
 d \boldsymbol{X_t} = \boldsymbol{f}(\boldsymbol{X_t}) dt + \boldsymbol{\sigma}(\boldsymbol{X_t}) d \boldsymbol{W_t},  \, \boldsymbol{X}_t\in \Real^d, \quad \boldsymbol{D} = \boldsymbol{\sigma} \boldsymbol{\sigma}^\top \in\Real^{d\times d}.
\end{equation}  
We refer interested readers to \cite{HuangJLY15-existence-steadyFP} for theoretical considerations of steady-state Fokker-Planck equations. 
\zz{The property of probability density function suggests that \zz{ the solution $p$ decays $0$ as $x$ goes to infinity in the Euclidean norm}. In this paper, we assume that}
\begin{enumerate}

\item \zz{The solution of equation \eqref{eq:fokker-planck-steady} is concentrated in a bounded domain $\Omega$.}
\item \zz{Outside the bounded domain $\Omega$, the solutions are almost zero. Along the boundary of $\Omega$, $p$ rapidly decay to 0.}

\end{enumerate}
\zz{
These assumptions allow us to find an open bounded domain for efficient simulations and introduce the} \rone{boundary condition $p(x) = 0
$ for $x$ belongs to $\partial \Omega$, the boundary of bounded domain $\Omega$.} Instead of arbitrarily assuming large domains, we estimate the computational domain by using  
 some simulated trajectories of the solution to the SDE \eqref{eqn:fokker-planck-sde}, see Section \ref{sec:bounded domain}.
Once a satisfactory domain is found, we use physics-informed neural networks, where the network approximates a density function. 
\rone{We illustrate our methodology for solving Equation \eqref{eq:fokker-planck-steady} in Figure \ref{fig:flow-chart-main}.}
\begin{rem}
Other than some data used in finding the computational domain, 
we don't use experimental data or simulation data  from SDE \eqref{eqn:fokker-planck-sde}.
Thus, our method is not data-driven, compared with methods  in 
\cite{Dobson22-fokkerplanck,zhai2022DL4FP}. 
\end{rem}

\begin{figure}[ht]
\centering
 \includegraphics[scale=0.8]{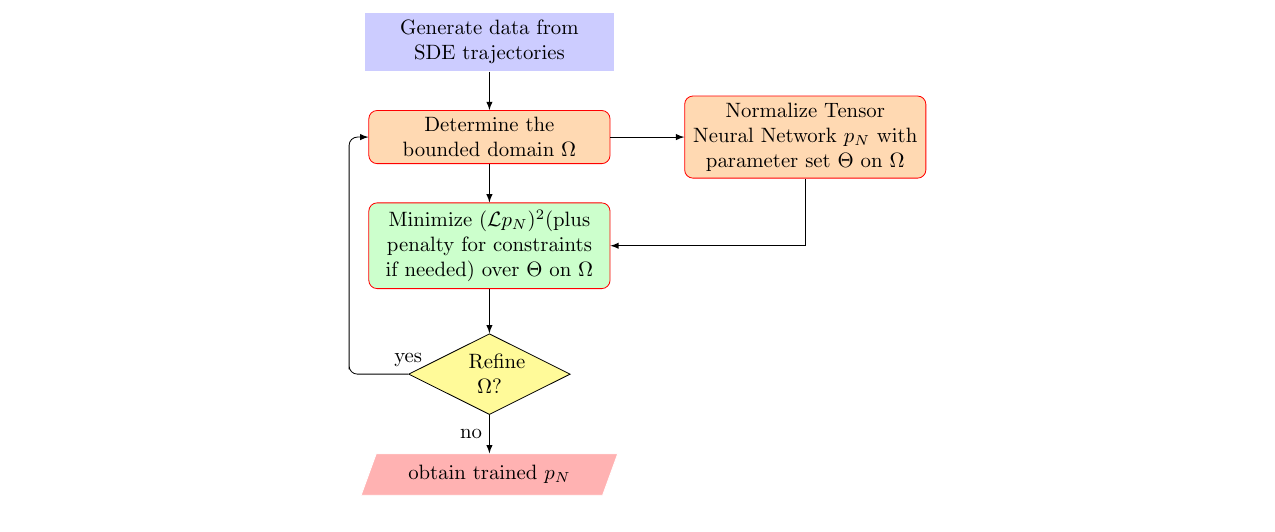}
 \caption{Flowchart of the proposed methodology for solving the Fokker-Planck equation}
 \label{fig:flow-chart-main}
\end{figure}
 

\subsection{Estimating Bounded Domain}\label{sec:bounded domain}
To improve the efficiency of computation, we introduce a bounded domain $\Omega \subset R^d$, on which the solution $p^*$ to Equation \eqref{eq:fokker-planck-steady} is highly concentrated such that  $1-\int_\Omega p^* \,dx$ \zz{ is nonnegative and  sufficiently small}. We also require  $p^*$ to be extremely close to zero outside the domain $\Omega$. 
\zz{To avoid confusion, we will use the terms bounded domain and numerical support interchangeably in this paper, both referring to the bounded domain $\Omega$.}

For $\Omega$, we use the following hypercube
$
\Omega = \bigotimes_{j=1}^d [O_j - r, O_j + r],
$ 
where $O_j$ and $r$ are real numbers. 
The hypercube facilitates the integration of our tensor neural networks over the domain, see 
Section \ref{sec:TNN} for details. 
\rone{We estimate the domain $\Omega$ through SDE trajectories. Specifically, to determine $O_j$, we apply the  Euler-Maruyama scheme of the SDE \eqref{eqn:fokker-planck-sde} \zz{up to a sufficiently terminal time}. Similar to the procedure in \cite{zhai2022DL4FP}, we choose a sufficiently large burn-in time, smaller than the terminal time, to ensure the trajectory points between the burn-in time and the terminal time represent the invariant distribution of the \eqref{eqn:fokker-planck-sde} (See \cite{levin2017markov}, Chapter 4). 
We denote the set of trajectory points between the burn-in time and the terminal time by $\mathbf{\Xi} \subset R^d$. }
\zz{We choose $\boldsymbol{O} = (O_1, O_2,..., O_d) 
=\frac{\sum_{\boldsymbol{z_i} \in \mathbf{\Xi}} \boldsymbol{z_i}}{\# \mathbf{\Xi}}$.}
To estimate $r$ for the hypercube $\Omega$, we first find a  constant $B$ such that 
$ 
B =  C \max_{j \in \{1,...d\}}( \max_{\boldsymbol{z_i} \in \mathbf{\Xi}}(O_j - z_{ij}) ), 
$ 
where we choose $C  > 1$ so that $\Omega$ includes all the samples from the simulation. 
\zz{We set $r=B$ and call $B$ the initial half-edge length. This process can be summarized in the Algorithm \ref{alg:obtain-bounded-domain}:}

\begin{algorithm}
\caption{Estimate the
numerical support $\Omega$}
\label{alg:obtain-bounded-domain}
\begin{algorithmic}
\State{\textbf{Input:}} step size $h$, burn-in step $t_{burnin}$, terminal step $t_{terminal}$, functions $\boldsymbol{f}$ and $\boldsymbol{\sigma}$ in equation \eqref{eqn:fokker-planck-sde}, initial set $\mathcal{A}_0 = \{\boldsymbol{z_0^k}\}_{k=1}^Q$.

\State{\textbf{Output:}} Center $\boldsymbol{O}$, Half-edge length $B$,  Numerical bounded domain $\bigotimes_{j=1}^d [O_j - B, O_j + B]$.

\State{Initialize $\mathbf{\Xi} = \emptyset$ (set of trajectory points)}
\For{$t = 0, 1, \dots, t_{terminal}-1$}

    \For{$k = 1, 2, \dots, Q$}

    \State{
    $\boldsymbol{z_{t + 1}^k} = \boldsymbol{z_{t}^k} + \boldsymbol{f}(\boldsymbol{z_{t}^k}, t) h + \boldsymbol{\sigma}(\boldsymbol{z_{t}^k}, t) \boldsymbol{\Delta \beta_{t}}$ where $\boldsymbol{\Delta \beta_{t}}$ is a sample from $N(0, h \boldsymbol{I}_{d \times d})$
    }
    \EndFor

    \State{Obtain the trajectory points at time $t+1$, $\mathcal{A}_{t+1} = \{\boldsymbol{z}_{t+1}^k\}_{k=1}^Q$}
     
    \If{$t > t_{burnin} - 1$}
    
    Add all points from $\mathcal{A}_{t+1}$ to $\mathbf{\Xi}$ 
    \EndIf

\EndFor

\State{Compute $\boldsymbol{O} = \frac{\sum_{\boldsymbol{z_i} \in \mathbf{\Xi}} \boldsymbol{z_i}}{\# \mathbf{\Xi}}$ and compute $B = C \max_{j \in \{1,...d\}}( \max_{\boldsymbol{z_i} \in \mathbf{\Xi}}(O_j - z_{ij}) )$.
}

\State{Obtain $\boldsymbol{O}$, $B$ and $\bigotimes_{j=1}^d [O_j - B, O_j + B]$.
}
\end{algorithmic}
\end{algorithm}


Training on a large bounded domain becomes expensive, especially in higher dimensions.
 We may refine the bounded domain using pre-trained neural network solutions. 
 First, we use the numerical support obtained above to train our tensor neural networks using the proposed methodology in this section and denote the solution as $p_N$. 
Second, we refine the numerical support by finding a small $r$, denoted as $r^*$ and  $r^*< B$, which corresponds to a bounded domain $\Omega^{*} = \bigotimes_{j=1}^d[O_j-r^*, O_j+r^*]$. $\Omega^{*}$ can still be denoted as 
$\Omega$ if no confusion arises. We desire that $\int_{\Omega^*} p_{N} \,d x$ is greater than a threshold,  $\vartheta $.  We then set the bounded domain as 
$\Omega=\bigotimes_{j=1}^d [O_j-r^*, O_j+r^*]$, which we call \textit{refined bounded domain or refined numerical support}. The process of finding $r^{*}$  is described in Algorithm \ref{alg:refine-support}.
\begin{algorithm}
\caption{Refining the numerical support}
\label{alg:refine-support}
\begin{algorithmic}
\State \textbf{Input:} pre-trained neural network $p_N$, initial half-edge length $r = B$, threshold $\vartheta$, center $\boldsymbol{O}$.
\State \textbf{Output:} refined half-edge length $r^{*}$, refined bounded domain $\Omega = \bigotimes_{j=1}^d [O_j - r^{*}, O_j + r^{*}]$.

\State \textbf{Step 1:} 
Partition the interval $(0, r]$ into a candidate set $S_n = \{r_i : i = 1, 2, \dots, n\}$.

\State \textbf{Step 2:} 
Find the smallest $r^{*} \in S_n$ such that:
$$
\int_{\bigotimes_{j=1}^d [O_j - r^{*}, O_j + r^{*}]} p_N(x) \, dx > \vartheta.
$$

\State \textbf{Step 3:} 
Set $r = r^{*}$ and update the refined domain $\Omega = \bigotimes_{j=1}^d [O_j - r^{*}, O_j + r^{*}]$.

\end{algorithmic}
\end{algorithm}


\zz{The hyperparameters $\vartheta$ will depend on the specific cases. Typically, we set $0.95 \leq \vartheta \leq 1.0$ since we expect that the integration of the exact solution $p^{*}$ on the refined domain is close to $1.0$.
See Section \ref{sec:numerical} for details and the candidate set $S_n$ in  \ref{sec:append-partition} for all the  test cases
in Section \ref{sec:numerical}.
}

\begin{table}[ht]
\centering
\begin{tabular}{|c|l|}
\hline
\textbf{Symbol} & \textbf{Explanation} \\ \hline
$\mathbf{\Xi}$ & Set of trajectory points from SDE simulations \\ \hline
$t_{terminal}$ & Terminal time for SDE simulations \\ \hline
$t_{burnin}$ & Burn-in step for SDE simulations\\ \hline
$Q$ & Number of trajectories\\ \hline \hline
$\Omega$       &  numerical support, bounded domain \\ \hline
$\boldsymbol{O}$          & Center of the numerical support, $\boldsymbol{O} = (O_1, O_2,..., O_d)$ \\ \hline
$r$            & Half-edge length of the numerical support\\ \hline
$B$          & Initial half edge length \\ \hline \hline
$p_N$          & Pre-trained neural network solution \\ \hline
$\vartheta$    & Threshold,  for  refining the numerical support \\ \hline
$r^*$          & Refined half-edge length for the numerical support \\ \hline
\end{tabular}
\caption{\color{black}Notations in Section \ref{sec:bounded domain}}
\end{table}

\subsection{Tensor neural network (TNN)} \label{sec:TNN}
We now introduce the structure of \textit{tensor neural network}(TNN) of the following form:
\begin{equation}\label{eq:KDE Estimator}
     p_N(\boldsymbol{x}; \Theta) =  \frac{1}{Z(\Theta)} \sum_{i=1}^N  c_i\bigotimes_{j=1}^d  k_{ij}(x_j;\Theta_{ij})\quad 
     \boldsymbol{x} = (x_1,x_2,\cdots,x_d) \in \Real^d,\, k_{ij}(x_j) \geq 0,\, c_i \geq 0, \quad x_j \in \Real. 
\end{equation}
Here $\Theta$ is the parameter set of TNN;
`$\bigotimes$' refers to the tensor product; and 
$Z(\Theta)$ is the normalization constant such that the integration of $p_N(x;\Theta)$ over $\Omega$ is 1 and 
\begin{equation}\label{eq:estimator-integral}
Z(\Theta) = \sum_{i=1}^N c_i \left( \prod_{j=1}^d \int_{[O_j- r, O_j + r]} k_{ij}(x_j;\Theta_{ij})  \,dx_j \right).
\end{equation}
Also,  $k_{ij}(x_j,\Theta_{ij})$'s are 
one-dimensional neural networks and nonnegative. 
We call  $N$ the rank of the tensor product networks and  $\bigotimes_{j=1}^d  k_{ij}(x_j;\Theta_{ij})$ the $i$-th sub tensor neural networks. 
%
%

In principle, any universal approximators can be applied here for the network $k_{ij}$. \rone{To keep the discussions brief}, we use two types of neural networks for $k_{ij}$: either feedforward neural networks or radial basis function networks. In the next two paragraphs, we will discuss the specifics of these two types of tensor neural networks.
\paragraph{Tensor Feed-Forward Networks}\label{sec:TFFN}
When the one-dimensional networks $k_{ij}$'s are all fully connected feed-forward neural networks, we call $p_N(x; \Theta)$ \textit{Tensor Feed-Forward Network} (abbreviated as TFFN).  Here we set $c_i = 1$ for all $i$.  
The trainable parameters are $\Theta_{ij}$ from the one-dimensional neural networks $k_{ij}$'s. We highlight several details of TFFN we use in simulations.
\begin{itemize}

\item (Number of Parameters):
As $k_{ij}$ is a fully-connected neural network with $l + 1$ hidden layers. The input dimension and output dimension of  $k_{ij}$ are both 1. Suppose that in each hidden layer we have  $w$ neurons. 
The order of number of parameters of $k_{ij}$ is $lw^2$. Since the number of $k_{ij}$ is $Nd$, the number of parameters of TFFN is $Ndlw^2$.

\item (Activation Functions): For each $k_{ij}$, activation functions of the hidden layers are all $tanh$. The activation function of the output layer is \textit{softplus}, which guarantees non-negative outputs. 

\item  (Normalization Constant): The normalization constant  $Z(\Theta)$ is computed with selected numerical integration. Details can be found in Section \ref{sec:methods-set-up-training}.

\end{itemize}

\paragraph{Tensor Radial Basis Function Networks}\label{sec:TRBFN}

When $k_{ij}$'s are all radial basis function networks, each $k_{ij}$ is an additive kernel, i.e., linear combination of radial basis functions ({RBF}) in the following form:
\begin{equation}\label{eq:additive-kernel-1d}
k_{ij}(x_j;\Theta_{ij}) = \sum_{\ell=1}^m \alpha_{ij}^{(\ell)} k_{ij}^{(\ell)}(| \frac{x_j - s_{ij}^{(\ell)}}{h_{ij}^{(\ell)}} | ),\quad \alpha_{ij}^{(\ell)} \geq 0, \, \sum_{\ell = 1}^m \alpha_{ij}^{(\ell)} = 1.  
\end{equation}
Here  $k_{ij}^{(\ell)}$ is a 1-dimensional radial basis function.
We call this tensor network as \textit{Tensor Radial Basis Function Network} (abbreviated as {TRBFN}).
\rone{
Here $m$ represents the number of basis and  $\Theta_{ij} = \{\boldsymbol{\alpha}_{ij} ,  \boldsymbol{s}_{ij}, \, \boldsymbol{h}_{ij} \}$, where for each $i$, $j$, $\boldsymbol{\alpha}_{ij}$, $\boldsymbol{s}_{ij}$ and $\boldsymbol{h}_{ij}$ are m-dimensional vectors. }
Specifically, $\alpha_{ij}^{(\ell)}$ is the $\ell$-th element in $\boldsymbol{\alpha}_{ij}$ which represents the coefficient of the $\ell$-th radial basis function; $h_{ij}^{(\ell)}$ is the $\ell$-th element in $\boldsymbol{h}_{ij}$ which represents the scale/bandwidth of $\ell$-th radial basis function $k_{ij}^{(\ell)}$; and  $s_{ij}^{(\ell)}$ is the $\ell$-th element in $\boldsymbol{s}_{ij}$ which represents the shifts/anchor points of $\ell$-th radial basis function $k_{ij}^{(\ell)}$.
{Unlike TFFN, $c_i$'s in TRBFN are trainable and $\sum_{i=1}^N c_i = 1$}. We highlight several details of TRBFN:

\begin{itemize}

\item (Radial Basis Functions): In theory, any positive definite kernels can be applied here. In this work, we are confined to the following radial basis functions for $k_{ij}^{\ell}$
in \eqref{eq:additive-kernel-1d}:  
\text{Gaussian} $\exp(-\abs{x}^2)$,  
\text{inverse multi-quadratic} $( (1 + \abs{x})^2 )^{-2.5}$,   and one of the \text{Wendland's functions}
  $\max{((1 - \abs{x})^3,0)} ( 3 \abs{x} + 1)$.

\item (Number of Parameters): In Equation \eqref{eq:additive-kernel-1d}, the numbers of $\alpha_{ij}^{(\ell)},\: s_{ij}^{(\ell)}\:$ and $h_{ij}^{(\ell)}$ are all $m$, for each $i,j$. Since the number of $k_{ij}$ is $Nd$, the number of parameters of TRBFN is $3Ndm$.

\item (Constraints of Parameters): Parameters in radial basis networks can be 
constrained for efficient training. 
In this work,  we impose constraints on the shifts $s_{ij}^{(\ell)} $and bandwidths $h_{ij}^{(\ell)} $ in TRBFN. \zz{The constraints are related to the half-edge length $r$ of the hypercube  $\Omega$ and we require}
\begin{equation}\label{eq:constraints-shifts}
|s_{ij}^{(\ell)} - O_j| < r,\qquad 
|h_{ij}^{(\ell)}| < |r - |s_{ij}^{(\ell)} - O_j||. 
\end{equation}
Intuitively, these constraints can help  TRBFN concentrate inside the bounded domain $\Omega$. These constraints will be implemented via penalization, see \eqref{eqn:trbfn-loss}.

\item (Normalization Constant): The normalization constant $Z(\Theta)$ is computed analytically due to the simple forms of the radial basis functions. 

\end{itemize}

\subsection{Loss function}\label{sec:methods-set-up-loss}
In the loss function, we incorporate the residual of the Fokker-Planck equation \eqref{eq:fokker-planck-steady} and the integration of the approximated solution being 1. The non-negativity of the solution can be accommodated by
applying a non-negative activation function in the last
layers of 1D feedforward networks or using non-negative radial basis functions, as described in the last subsection. 
At the continuous level, we use the following loss function $  \int_{\Omega} \Big({\mathcal{L}\displaystyle p_N(x;\Theta)\Big)}^2 \,dx.$ We  use uniformly sampled points on $\Omega$ to discretize the integral and thus obtain the following discrete loss:
\begin{equation}
\sum_{i=1}^{\# \,\text{samples}}\Big({\mathcal{L}\displaystyle p_N(x_i;\Theta)\Big)}^2. 
  \label{eqn: Loss}
\end{equation}

\paragraph{Boundary Condition}To ensure the vanishing value of TFFN on the boundary $\partial \Omega$, we multiply TFFN by the function 
\begin{equation}
\bigotimes_{j=1}^d \max{((\frac{1 - (x_j - O_j )^2}{r_j^2})^3,0)}\quad 
\label{eqn:trunc}
\end{equation}
\zz{
For TRBFN, we enforce the boundary condition weakly by penalization using the following loss \eqref{eqn:trbfn-loss}: 
\begin{equation}
  \begin{split}
  &
   \sum_{i=1}^{\# \,\text{samples}}\Big({\mathcal{L}\displaystyle p_N(x_i;\Theta)\Big)}^2 
  \\
  & + \mathcal{W}_1  \sum_{i,j,\ell}\left(\max(|s_{ij}^{(\ell)} - O_j| - r, 0 ) + \max(|h_{ij}^{(\ell)}| -  (|r - |s_{ij}^{(\ell)} - O_j|\,|), 0) \right)
  \\
&  + \mathcal{W}_2\sum_{i,j}( (k_{ij}(O_j + r)  + k_{ij}(O_j - r) ),
\end{split}
\label{eqn:trbfn-loss}
\end{equation}
where $\mathcal{W}_1> 0$ and $\mathcal{W}_2 > 0$ are the weights for different penalty terms. The values of $\mathcal{W}_1$ and $\mathcal{W}_2$ for different examples are presented in \ref{sec:append-Weights}. 
Specifically, we enforce the  boundary condition through  the penalty term in the third line of  
\eqref{eqn:trbfn-loss} while the second line of \eqref{eqn:trbfn-loss} represents the penalization of the constraints of parameters from Equation  \eqref{eq:constraints-shifts}. 
}

\subsection{Training settings} \label{sec:methods-set-up-training}
Both TRBFN and TFFN are implemented with the Python programming language using the JAX library. 
We use LION methods \cite{chen2024symbolic} 
for training both networks. 
\rone{We also compare the performance of different optimizers in the ablation studies in Section \ref{sec:ablation}}.
The number of epochs is $10^5$ \zz{for all examples  while batch sizes are detailed in \ref{sec:append-training}.}


\paragraph{Training Platform} We conduct all experiments of TFFN on an NVIDIA A100 GPU with 80 GB memory and all experiments of TRBFN on an NVIDIA A100 GPU with 40 GB memory.

\paragraph{Data Precision} \tw{For TFFN, we use double precision(float64) since we observe that TFFNs with double precision (float64) obtain better training result. For TRBFN, we use single precision (float32) since we observe that TRBFNs with double precision and single precision achieve the same results.}

\paragraph{Initialization of tensor neural networks} For TFFN, we apply Xavier Glorot's initialization to each feedforward neural network $k_{ij}$. For TRBFN, we initialize all $s_{ij}^{(\ell)}$ constrained within $[O_j - r, O_j + r]$ and normally distributed with mean $O_j$ and standard error $\sqrt{r}$.   Bandwidths/scaling factors $h_{ij}^{(\ell)}$ are initialized as $0.9 r$.

\paragraph{Computing $Z(\Theta)$} \zz{For the tensor feedforward networks (TFFN), we use the piecewise Gauss-Legendre quadrature rule to compute the integral $\int_{[O_j - r_j, O_j + r_j]}k_{ij}(x_j, \theta_{ij}) dx_j$ and then compute $\int_\Omega p_N(x;\theta) dx$. Specifically, we split $[O_j-r_j, O_j + r_j]$ into $M=16$ subintervals and use $16$  Gauss-Legendre quadrature points with corresponding weights in each subinterval. For tensor radial basis network (TRBFN), we use analytical forms of the integrations  of 
radial basis functions such that $\int_{[O_j- r, O_j + r]} k_{ij}(x_j;\theta_{ij}) dx_j = 1 $. }

\begin{rem}
Increasing the number of $N$ and $m$ can increase {TRBFN}'s approximation capacity. But increasing the number of $N$ seems more significant than increasing the number of $m$. 
 For our experiments in Example \ref{exm:2d-ring}, we found that increasing $N$ with a small number of $m$ ($m < 10$) can produce smaller losses 
and more accurate solutions
than increasing $m$ with a small number of $N$ ($N < 10$). 
Note that the number of parameters 
of TRBFN is at the order $3Ndm$. 
We thus choose a large $N$ and a small $m$ for TRBFN in Section \ref{sec:numerical}. 
For TFFN, we use a relatively small $N$ while we use the size of the one-dimensional feedforward neural networks to control the number of parameters. For example, if we use a networks with $l+1$ hidden layers and the architecture  $[1,\, w,\ldots\,w,1]$,
the number of parameters is at the order of 
$N d l w^2$.  In practical implementation, we need a balance between the rank $N$ and the number of radial basis functions or the size of feedforward neural networks, due to the tensor implementation of prevalent Python packages.
\end{rem}

\begin{algorithm}

\begin{algorithmic}
\caption{Solving the Fokker-Planck Equation}

\State{\textbf{Input:}} bounded domain $\Omega$, initial parameter $\Theta_0$, number of epochs $T$, the batch size $M$, learning rates.
\State{\textbf{Output:}} minimizer $\Theta^{*}$ and tensor neural networks.
\For{$t = 0, 1, \dots, T-1$}

\State{1. Sample $M$ points $D_t = \{x_i^{(t)}\}_1^M$ from uniform distribution on $\Omega$}

\State{2. Compute the gradient of loss function $\nabla L(D_t; \Theta_t )$ based on current parameters $\Theta_t$. Here $L(D_t;\Theta_t) $ represent the loss function with points $D_t$ and parameters of tensor neural networks $\Theta_t$. We use  the loss \eqref{eqn: Loss} for TFNN and the loss  \eqref{eqn:trbfn-loss} for TRBFN.}

\State{3. Update the parameter $\Theta_t$ with optimizer LION and gradient $\nabla L(D_t; \Theta_t )$. Obtain $\Theta_{t+1}$}
\EndFor
\State{
Obtain $\Theta^{*} = \Theta_T$ and the trained tensor neural networks.
}
\end{algorithmic}
\end{algorithm}

\section{Numerical results}\label{sec:numerical}
In this section, we consider the equation \eqref{eq:fokker-planck-steady} with the following drift
\begin{equation}\label{eq:processes}
   \boldsymbol{f}(\boldsymbol{x}) = - \frac{1}{2} \boldsymbol{D}(\boldsymbol{x}) \nabla \boldsymbol{H}(\boldsymbol{x}) +  \boldsymbol{g}(\boldsymbol{x}),\quad  g_{i}(\boldsymbol{x}) = \sum_{j=1}^d \frac{\partial}{\partial x_j} (\frac{1}{2}D_{ij}(\boldsymbol{x})).
\end{equation}
Then the solution to \eqref{eq:fokker-planck-steady}  reads 
\begin{equation}
    p(\boldsymbol{x})  = \frac{1}{\int_{\Real^d} \exp(-H(\boldsymbol{x}))\,dx} \exp(-H(\boldsymbol{x})).
\end{equation} 
We remark that the methodology in this work can be applied to the equation \eqref{eq:fokker-planck-steady} \zz{ with a more general drift} but this specific drift allows us to have a baseline solution to compare with.


To evaluate the numerical results, we use the following relative error 
\begin{equation}\label{eq:defn-relative-error}
   \frac{1}{n}\sum_{i=1}^n \abs{\frac{p^*(\boldsymbol{x}_i) - p_N(\boldsymbol{x}_i)}{p^*(\boldsymbol{x}_i)}}, \quad x_i\in  \Gamma_\varepsilon=
\set{\boldsymbol{x} \in \Gamma |p^*(\boldsymbol{x}) > \varepsilon}. 
\end{equation}
Here $p^*(\boldsymbol{x})$ is the exact solution and $p_N(\boldsymbol{x})$ is the numerical solution obtained from tensor neural networks.
Also, $\Gamma_{\varepsilon} \subset \Real^d$ is a set of interest and $\varepsilon>0$ and is specified in each example.  The testing points $\boldsymbol{x}_i$ are uniformly sampled from the region $\Gamma$ of interest.

We test TRBFN on five examples of steady-state Fokker-Planck equations. We have experimented with several choices of different RBFs in \eqref{eq:additive-kernel-1d}.  For simplicity, we 
denote by 
TRBFN($N$, $m$)  the tensor  radial basis function neural networks with  the rank $N$ in \eqref{eq:KDE Estimator} and the number of basis $m$ in \eqref{eq:additive-kernel-1d}. 
{We test TFFN on first three examples.} 
We also denote by TFFN($N$, $\cdot$) the tensor feedforward neural networks with 
the rank  $N$  in \eqref{eq:KDE Estimator} and the architecture of the 1D feedforward neural network ``$\cdot$". \tw{The feedforward network with $l + 1$ hidden layers, each of which has $w$ neurons,  will be denoted by [1 $w$ $\cdots$ $w$ 1].}

We list the bounded domain $\Omega$ in Table \ref{tab:bounded_domain} for all examples in this section. We take $C=1.1$ for all examples.
\begin{table}[!ht]
\centering
\begin{tabular}{|c|c|c|c|c|c|}
\hline
\textbf{Example} & \textbf{Dimension} & \text{B} & \textbf{Center(O)} & $\vartheta$ \\ 
\hline
4.1 & 2D & 2.1467 & (-0.0056, 0.0026) & N/A \\ 
\hline
4.2 & 4D & 2.6472 & (-0.0043, 0.1048,  0.0044, 0.0081) & 0.999\\ 
\hline
4.3 & 6D  & 1.5191 & origin & 0.990 \\ 
\hline
4.4 & 6D  & 5.2872 & (-0.0322, 0.0598, -0.0045, 0.0197, -0.0036, 0.0054) & 0.97 \\ 
\hline
4.5 & 10D & 2.8580 & 
(0.0017, -0.0016, -0.0026, -0.0025,
  0.0058,\ldots &\\
 & & & %
  -0.0030, 0.0, -0.0025,
 -0.0183, -0.0012) & N/A\\ 
\hline
\end{tabular}
\caption{{\color{black}The numerical supports
$\Omega= \bigotimes_{j=1}^d [O_j-B,O_j+B]$ for all examples in Section \ref{sec:numerical}.}}
\label{tab:bounded_domain}
\end{table}

\begin{exm}[2D, ring potential]\label{exm:2d-ring}
Consider the equation \eqref{eq:fokker-planck-steady}  with 
the following drift $\boldsymbol{f}$ in \eqref{eq:processes} using 
$H(x) =2 (x_1^2 + x_2^2 - 1)^2,$
and diffusion $\boldsymbol{D} = 2 \boldsymbol{I}_{2\times 2}$. 
This example was used in \cite{zhai2022DL4FP}. 
The potential function $H(x)$ has the level curve $x_1^2+x_2^2=1$, where the exact solution (the density function) reaches its mode.  
\end{exm}

The bounded domain we use in Example \ref{exm:2d-ring} is described in Table \ref{tab:bounded_domain} with center $\boldsymbol{O}$ and half-edge length \zz{$r = B = 2.1467$}. First, we test the \zz{TRBFN}.  Each $k_{ij}$ in \eqref{eq:additive-kernel-1d} for {TRBFN} only contains the Wendland radial basis functions. 
 We use TRBFN(5, 720) and TRBFN(1000, 3). 
 For both networks,   we plot the history of training loss versus training epochs in Figure \ref{fig:exm-2d-ring}. We observe that \text{TRBFN}(1000, 3) achieves significantly lower loss than \text{TRBFN}(5, 720) at $10^5$-th epochs:  $10^{-4}$ vs $10^{-2}$.  
 The smaller loss suggests a better approximation, as observed in Table 
 \ref{tbl:exm-2d-ring-trbf}, \zz{where 
 we} present the relative errors defined in  \eqref{eq:defn-relative-error}.  \zz{We uniformly sample $10^5$ test points in $\Gamma=[-2, 2]^2$ and we use} $\Gamma_\varepsilon$ with $\varepsilon=10^{-2},\,5 \times 10^{-2},10^{-1}$.
 From Table \ref{tbl:exm-2d-ring-trbf}, we observe  that using TRBFN(1000, 3) achieves the accuracy that is  
  at least one order of magnitude
 higher than using TRBFN(5, 720).  This suggests that  the large rank  ($N = 1000$) in the tensor radial basis function networks results in  larger 
 approximation capability than 
 a small rank ($N=5$). 
\begin{figure}[!ht]
    \centering
    \begin{subfigure}[b]{0.45\linewidth}
        \includegraphics[width=\linewidth]{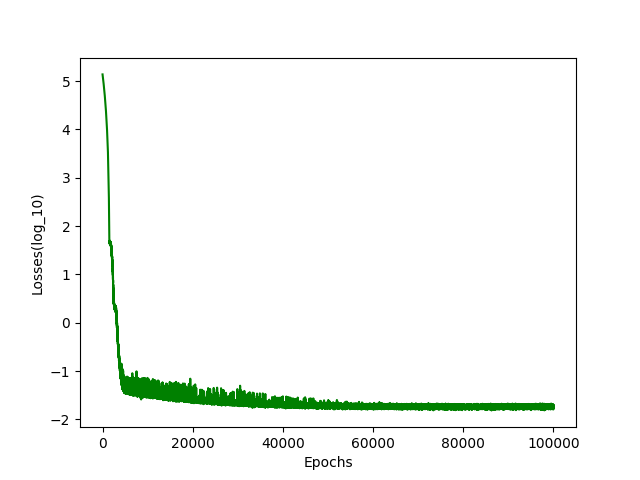} 
        \caption{\small{Loss history of TRBFN(5, 720)}}
    \end{subfigure}
    \hspace{0.05\linewidth}
    \begin{subfigure}[b]{0.45\linewidth}
        \includegraphics[width=\linewidth]{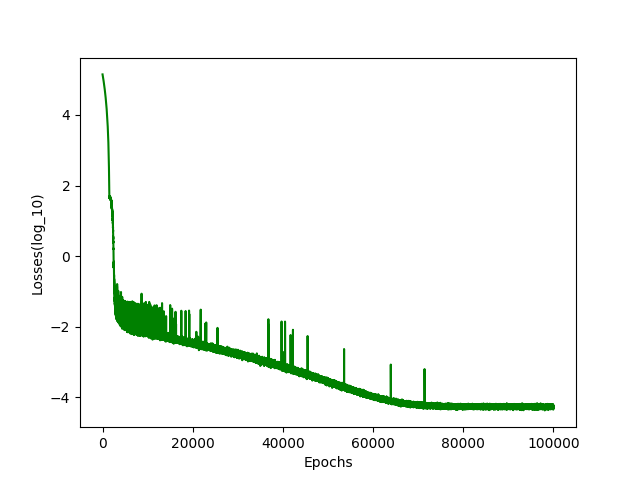} 
        \caption{\small{Loss history of TRBFN(1000, 3)}}
    \end{subfigure}
    \caption{Example \ref{exm:2d-ring}: history of loss function. In TRBFN, we use the Wendland kernel.}
    \label{fig:exm-2d-ring}
\end{figure}

\begin{table}[!ht]
    \centering
    \begin{tabular}{lcccc} 
        \toprule
        & & \multicolumn{3}{c}{density region/error } \\ 
        \cmidrule{3-5} 
        model & \# parameters (est.) & $\Gamma_{1 \times 10^{-2}}$ & $\Gamma_{5 \times 10^{-2}}$& $\Gamma_{1 \times 10^{-1}}$ \\ 
        \midrule
        TRBFN(1000,3) & 19000 & 0.0001 & 0.0001 & 0.0001 \\ 
        TRBFN(5,720) & 21605 &  0.0051 & 0.0029 & 0.0022 \\ \hline 
         TFFN(64,$[1\, 8\, 8\, \, 1]$) & 10240
 &0.0035 & 0.0025 & 0.0022\\
        TFFN(160,$[1\, 8\, 8\, 1]$) & 25600  & 0.0017 & 0.0011 & 0.0008  \\ 
        TFFN(5,$[1\, 32\, 32\, 32\, 1]$) & 21120
          & \tw{0.5297} & \tw{0.4686} & \tw{0.4751}  \\ 
        TFFN(10,$[1\, 32\, 32\, 32\, 1]$) & 42240  & 0.0070 & 0.0048 & 0.0045\\ 
        \tw{{TFFN}(1024, $[1\,8 \, 8 \,8 \,1]$)} & 294912 & \tw{0.0005} & \tw{0.0003} & \tw{0.0002}   \\  
        \hline
         $\#$  of  test points $n$ &  &44495 & 35392 & 26884 \\
        \bottomrule
    \end{tabular}
    \caption{\zz{ \tw{\small{Example \ref{exm:2d-ring}: average relative errors \eqref{eq:defn-relative-error} for different tensor neural networks. Here
    we sample  $10^5$ points from $\Gamma=[-2, 2]^2$.} } }
    }  \label{tbl:exm-2d-ring-trbf}
\end{table}

Second, we present results in Table \ref{tbl:exm-2d-ring-trbf}  from Tensor Feedforward Neural Networks. We observe that, for TFFN, a larger rank $N$ leads to better accuracy. 
For example, TFFN(160, [1, 8, 8, 1]) leads to smaller relative errors than TFFN(64, [1 8 8 1]).  
Compared to TRBFN, TFFN with rank 5 achieves \tw{two orders of magnitude lower accuracy than TRBFN with rank $N = 5$.}
\tw{We observe that, with deeper networks for each $k_{ij}$ and larger rank, TFFN(1024, [1 8 8 8 1]) achieves the same magnitude of accuracy as the TRBFN(1000, 3), and   the relative errors are  no larger than $0.05\%$ on the region $\Gamma_{10^{-2}}$. }

%




\begin{exm}[\zz{4D, solution with a single mode}] \label{exm:4.2-4dunimode}
Consider the drift 
\eqref{eq:processes} with the following 
potential function and the diffusion
\begin{align*}
 H(x)&= 3((x_1^4 - x_2)^2 + 2 x_2^2 ) + 
2 (x_3^2 - 0.3(x_3 x_4) + x_4^2 ),\\
\boldsymbol{D}  &= 2
\begin{bmatrix}
1 & 0 & 0 & 0\\
0 & 1 & 0 & 0\\
0 & 0 & 1.0 + V(x_3,x_4) & V(x_3,x_4)\\
0 & 0  & V(x_3,x_4) & 1.0 + V(x_3,x_4)
\end{bmatrix}, \quad V(x_3, x_4) = 0.1 x_3^2 x_4^2.
\end{align*}
The exact solution $p^*$  in  Example \ref{exm:4.2-4dunimode} 
has a mode at $(0, 0, 0, 0)$ while 
$H(x)$ exhibits complex interactions 
between $x_1$ and $x_2$,  and between $x_3$ and $x_4$. 
Specifically, the component $(x_1^4 - x_2)^2 + 2 x_2^2$ results in a high probability region focused around the line $x_1 =x_2$.
Moreover, steep gradients from 
$H(x)$ appear along the line $x_1=x_2$
near the origin, contributing to the concentration of the density function in this region.
The components involving $x_3$ and $x_4$ in $H(x)$ 
lead to the density function concentrated around the point 
$(0, 0, 0, 0)$.
\end{exm}

\zz{The numerical support we use in Example 4.2 is described in Table \ref{tab:bounded_domain} with center $O$ and half-edge length $r = B = 2.6472$.}
We first test TRBFN and TFFN on the numerical support \zz{$\bigotimes_{j=1}^4[O_j - 2.6472, O_j + 2.6472]$}, and then refine this bounded domain with TRBFN and test TFFN on the refined bounded domain.

\begin{figure}[!ht]
    \centering
    \begin{subfigure}[b]{0.45\linewidth}
 \includegraphics[width=\linewidth]{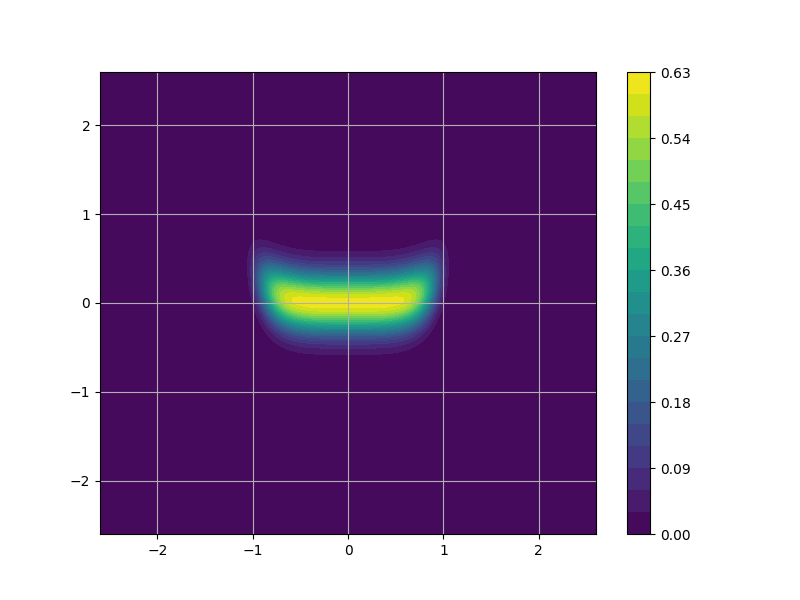} 
        \caption{\small{$(x_1, x_2, 0, 0)$}}
    \end{subfigure}
    \hspace{0.05\linewidth}
    \begin{subfigure}[b]{0.45\linewidth}
       \includegraphics[width=\linewidth]{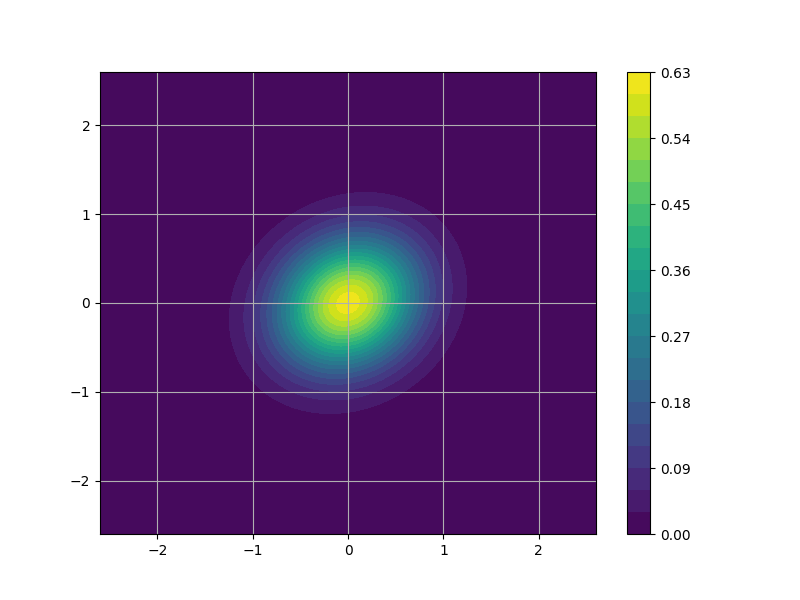} 
        \caption{\small{$(0, 0, x_3, x_4)$}}
    \end{subfigure}
    \caption{Contour maps of projections of  the exact solution $p^*(x)$ in 
    Example \ref{exm:4.2-4dunimode}}
\end{figure}

First, we test TRBFN on \zz{$\bigotimes_{j=1}^4[O_j - 2.6472, O_j + 2.6472]$}. 
All TRBFNs in this example only contain Wendland radial basis functions. We report in Table \ref{tbl:support-4d-uni-mode-trbfn-tfnn} the average relative errors. \zz{We observe that} all the  relative average errors from 
 TRBFN are less than 1\% on $\Gamma_{10^{-2}},\Gamma_{5\times 10^{-2}},\Gamma_{10^{-1}}$, except 
 TRBFN (64,27) on $\Gamma_{10^{-2}}$ \zz{where 
 $\Gamma=[-1,1]^4$}.
 Comparing results in Table \ref{tbl:support-4d-uni-mode-trbfn-tfnn} among TRBFNs, we conclude that a large rank and small $m$(number of basis functions) in TRBFN typically result in reduced error.

 Then, we test TFFN on the initial bounded domain \zz{$\bigotimes_{j=1}^4[O_j - 2.6472, O_j + 2.6472]$}. \tw{We observe that TFFN(128, [1 8 8 1]) and TFFN(64, [1 8 8 1]) have relative error greater than $10\%$ on $\Gamma_{10^{-2}}$.}

Next, we test TFFN on a smaller numerical support.  We refine the initial bounded domain by applying \zz{Algorithm \ref{alg:refine-support}} in Section \ref{sec:bounded domain}, where we set $\vartheta= 0.999$ and obtain 
$\bigotimes_{j=1}^4[O_j - 2.0, O_j + 2.0]$ with TRBFN trained on the initial bounded domain.  We report in Table 
\ref{tbl:support-4d-uni-mode-trbfn-tfnn} the average relative errors for  TFFN$(64, \,[1\, 8\, 8\, 1])$ and TFFN $(128, \,[1\, 8\, 8\, 1])$  trained on $\bigotimes_{j=1}^4[O_j - 2.0, O_j + 2.0]$. Both TFFNs achieve similar performance. The relative errors are below \tw{7\% on $\Gamma_{10^{-2}},\Gamma_{5\times 10^{-2}},\Gamma_{10^{-1}}$. A smaller numerical support allows TFFNs to achieve higher accuracy.}

\begin{table}[!ht]
    \centering
    \begin{tabular}{lccccc} 
        \toprule
       & & & \multicolumn{3}{c}{density region/error } \\ 
        \cmidrule{4-6} 
         model & $r$ & $\#$ parameters (est.) & $\Gamma_{1 \times 10^{-2}}$ & $\Gamma_{  5\times 10^{-2}}$ &  $\Gamma_{1\times 10^{-1}}$ \\ 
        \midrule
           TRBFN(1000, 3) &  $r = B = 2.6472$ & 37000 &  0.0045 & 0.0025 & 0.0016 \\ 
           TRBFN(64, 27) & $r = B = 2.6472$ & 20800& 0.0108 & 0.0060 & 0.0040 \\ 
           TRBFN(128, 27) & $r = B = 2.6472$ & 41600& 0.0069 & 0.0038 & 0.0026 \\ 

        \hline
        TFFN(64, [1 8 8 1]) & $r = B = 2.6472$  & 20480 & 0.1533 & 0.1187 & 0.1064 \\  
         TFFN(128, [1 8 8 1]) & $r = B = 2.6472$  & 40960  & 0.2043 & 0.1432 & 0.1226 \\ 
         
         TFFN(64, [1 8 8 1]) &  $r = 2.0$  & 20480  & 0.0696 & 0.0441 & 0.0329 \\  
         TFFN(128, [1 8 8 1]) &  $r = 2.0$  & 40960  & 0.0437 & 0.0257 & 0.0172 \\  \hline
         $\#$  of  test points $n$ & & & 51834 & 29950 & 18413 \\
        \bottomrule
    \end{tabular}
    \caption{ \tw{ \small{Example \ref{exm:4.2-4dunimode}: average relative errors \eqref{eq:defn-relative-error} for different tensor neural networks.  All $1\times10^5$ test points $x_i$ are sampled uniformly from $\Gamma=[-1, 1]^4$.} }  }
    \label{tbl:support-4d-uni-mode-trbfn-tfnn}
\end{table}

\begin{exm}[{\zz{6D, solution with a single mode}}]
\label{exm:6d-unimode}
Consider the drift 
\eqref{eq:processes} with the following 
potential function $H(x)=3 ((x_1^4 - x_2)^2 + 2 x_2^2 + (x_3^4 - x_4)^2 + 2 x_4^2 + (x_5^4 - x_6)^2 + 2 x_6^2) )$ and the diffusion  
$\boldsymbol{D} = 2 \boldsymbol{I}_{6\times 6}$.

In Example \ref{exm:6d-unimode}, the potential $H(x)$ has three specific terms of the form
$(x_{i}^4 - x_{i + 1})^2$, $i = 1,\, 3,\, 5$. The density function is more concentrated and has steeper gradients 
in regions of high density compared to Example \ref{exm:4.2-4dunimode} of four dimensions. \end{exm}

\zz{The initial bounded domain in Example \ref{exm:6d-multi-mode} is described in Table \ref{tab:bounded_domain} with center $\boldsymbol{O}$ and half-edge length $r = B = 1.5191$.} We will test TRBFN on  \zz{$\bigotimes_{j=1}^6[O_j - 1.5191, O_j + 1.5191]$} and test TFFN on the refined \zz{numerical support obtained using Algorithm \ref{alg:refine-support}.}

We first test this example with TRBFN(800, 3),  whose $k_{ij}$ only contains Wendland radial basis functions, on \zz{$\bigotimes_{j=1}^6[O_j - 1.5191, O_j + 1.5191]$}. The results are reported in Table \ref{tbl:small-support-6d-uni-mode-tnn}.  %
\zz{Using Algorithm \ref{alg:refine-support} with $\vartheta = 0.99$ and the TRBFN(800, 3) trained on initial bounded domain}, we obtain a smaller support $\bigotimes_{j=1}^6[O_j - 1.2, O_j + 1.2]$.

\tw{
We report the relative average errors in Table \ref{tbl:small-support-6d-uni-mode-tnn} for TRBFN over larger support and for TFFN  over the smaller support.} While both networks can capture the key features of the exact solution, TRBFN admits smaller relative errors over regions present in the table. 
For TFFN, we observe that increasing rank $N$ leads to reduced errors. 
We also observe in \zz{Table \ref{tbl:small-support-6d-uni-mode-tnn}} that deeper and wider neural networks give better results on the same support.

\begin{table}[!ht]
    \centering
    \begin{tabular}{lccccc} 
        \toprule
        & & & \multicolumn{3}{c}{density region/error } \\ 
        \cmidrule{4-6} 
         model &  $r$  & \# parameters(est.) & $\Gamma_{5\times 10^{-2}}$  & $\Gamma_{2.5\times 10^{-1}}$ & $\Gamma_{5\times 10^{-1}}$ \\ 
        \midrule
       TRBFN(800, 3) & $r = B = 1.5191$ & 44000 & 0.0891 & 0.0543 & 0.0427 \\ 
       \midrule
          {TFFN}(10, [1 32 32 32 1]) & refined $r = 1.2$   & 126720 & 0.1855 & 0.1132  &  0.0894  \\ 
         {TFFN}(20, [1 32 32 32 1])  & refined $r = 1.2$  & 253440 & 0.1001  &  0.0602  &  0.0479 \\ 
         {TFFN}(128, [1 8 8 1]) & refined $r = 1.2$   & 61440 & 0.3762 &    0.2646 & 0.2116   \\ 
         {TFFN}(20, [1 8 8 8 1]) & refined $r = 1.2$  & 17280& 0.2763 & 0.1817 & 0.1427 \\ 
         \midrule
           $\#$  of  test points $n$ & & & 34705 &     7796 &    1926    \\
        \bottomrule
    \end{tabular}
    \caption{\small{
    Example \ref{exm:6d-unimode}: average relative errors \eqref{eq:defn-relative-error} for different tensor neural networks. All $5 \times 10^5$ test points $x_i$ are sampled uniformly from $\Gamma = [-1, 1]^6$.
    }}
    \label{tbl:small-support-6d-uni-mode-tnn}
\end{table}

\vskip 20pt 

Those results from Examples \ref{exm:4.2-4dunimode} and \ref{exm:6d-unimode} demonstrate the capability of TRBFN and TFFN.
However, the exact solutions in Examples  \ref{exm:4.2-4dunimode} and \ref{exm:6d-unimode} have one mode. In Example \ref{exm:6d-multi-mode}, we present a density function with multiple modes.

\begin{exm}[\zz{6D, solution with multiple modes}]
\label{exm:6d-multi-mode}
Consider the drift 
\eqref{eq:processes} with the following 
potential function and the diffusion
\begin{align*}
 & H = 2(x_1^2 + x_2^2 + x_3^2 + 0.5(x_1 x_2 + x_1 x_3 + x_2 x_3)) - 
\ln(x_1^2 + 0.02) - \ln(x_2^2 + 0.02) \\
& + 0.5 (x_4^2 + x_5^2 + x_6^2 + 0.2(x_4 x_5 + x_4 x_6 + x_5 x_6) ), \quad \boldsymbol{D} = 2 \boldsymbol{I}_{6\times 6}.
\end{align*}
\end{exm}

The potential function
$H(x)$ in Example \ref{exm:6d-multi-mode} has three way interactions: interaction among $(x_1, x_2, x_3)$ and  interaction among $(x_4, x_5, x_6)$. 
The density function has four modes and each mode is not close to the rest. In this example, the training batch size is 40000.

\begin{figure}[!ht]
    \centering
    \begin{subfigure}[b]{0.45\linewidth}
        \includegraphics[width=\linewidth]{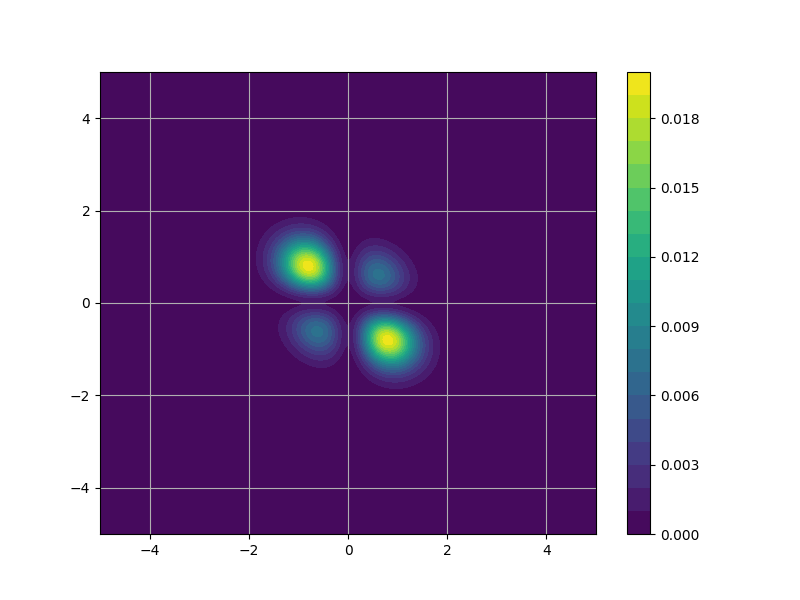} 
        \caption{\small{$(x_1, x_2, 0, 0, 0, 0)$}}
    \end{subfigure}
    \hspace{0.04\linewidth}
    \begin{subfigure}[b]{0.45\linewidth}
        \includegraphics[width=\linewidth]{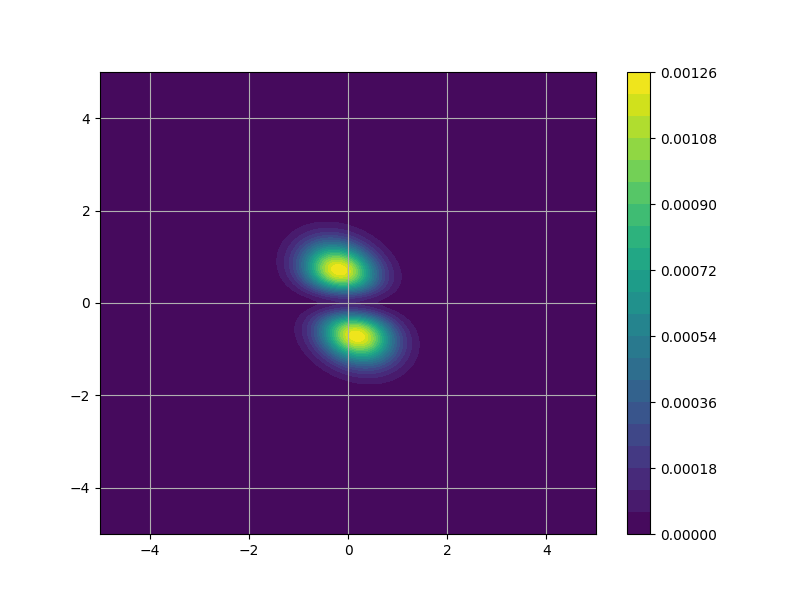} 
        \caption{\small{Example 4.4: $(x_1, 0, x_3, 0, 0, 0)$}}
    \end{subfigure}
    \caption{\small{Contour maps of  projections of the exact solution in Example \ref{exm:6d-multi-mode}}}\label{exm:contour-6d-multi-mode-exact}
\end{figure}

\zz{The initial bounded domain we use in Example \ref{exm:6d-multi-mode} is described in Table \ref{tab:bounded_domain} with center $\boldsymbol{O}$ and half-edge length $r = B = 5.2872$.} We test TRBFN on  $\bigotimes_{j=1}^6[O_j - 5.2872, O_j + 5.2872]$ and on \zz{a refined numerical support.  In this example, we only use   Wendland radial basis functions in TRBFN.}

We first test TRBFN on \zz{$\bigotimes_{j=1}^6[O_j - 5.2872, O_j + 5.2872]$.  The volume of  numerical support
 is large and the solution on most of the region $\Omega$ is small.
 We observe that the loss functions oscillate with epochs; see Figure \ref{fig:loss-trbfn-6d-mutiple-mode} where we use the tensor radial basis function network {TRBFN}(800, 6).
 We observe} in Table  \ref{tbl:6d-multiple-mode} that the relative errors within different density regions are under $30\%$. We present the contour map of projections of the numerical solution in Figure \ref{fig:contour-6d-mutiple-mode-est} and observe that the numerical solution captures the modes of the exact solution.

\begin{minipage}{0.5\textwidth}
 \centering
        \includegraphics[width= 1\textwidth]{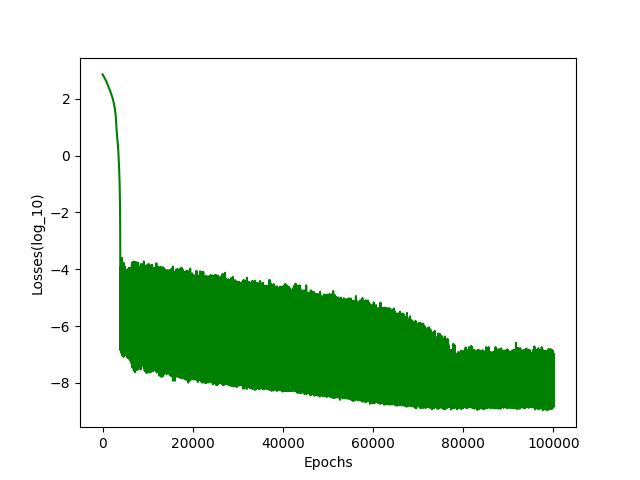} 
        \captionof{figure}{\small{Example \ref{exm:6d-multi-mode}: Loss history, TRBFN(800, 6). }}
        \label{fig:loss-trbfn-6d-mutiple-mode}
 \end{minipage}
\begin{minipage}{0.45\textwidth}
 \mbox{}
\vskip 10pt 
    \centering
    \begin{tabular}{lcccc} 
        \toprule
        & & \multicolumn{3}{c}{density region/error } \\ 
        \cmidrule{3-5} 
        &  & $\Gamma_{2\times 10^{-4}}$ & $\Gamma_{10^{-3}}$ & $\Gamma_{5\times 10^{-3}}$ \\ 
        \midrule
          & TRBFN(800, 6)  & 0.2550 & 0.2479 & 0.2589\\
        & \# of  test points $n$ & 91712 &     26334 &      2121 \\
        \bottomrule
    \end{tabular}
    \captionof{table}{\small{Example \ref{exm:6d-multi-mode}: average relative errors \eqref{eq:defn-relative-error} for TRBFN(800, 6) trained on $\Omega =  \bigotimes_{j=1}^6[O_j - 5.2872, O_j + 5.2872]$.
     All $5\times10^5$ test points $x_i$ are sampled uniformly from $\Gamma = [-2, 2]^6$.} The number of parameters of TRBFN(800, 3) is 44000.}
    \label{tbl:6d-multiple-mode}
\end{minipage}

\begin{figure}[!ht]
    \centering
    \begin{subfigure}[b]{0.45\linewidth}
        \includegraphics[width=\linewidth]{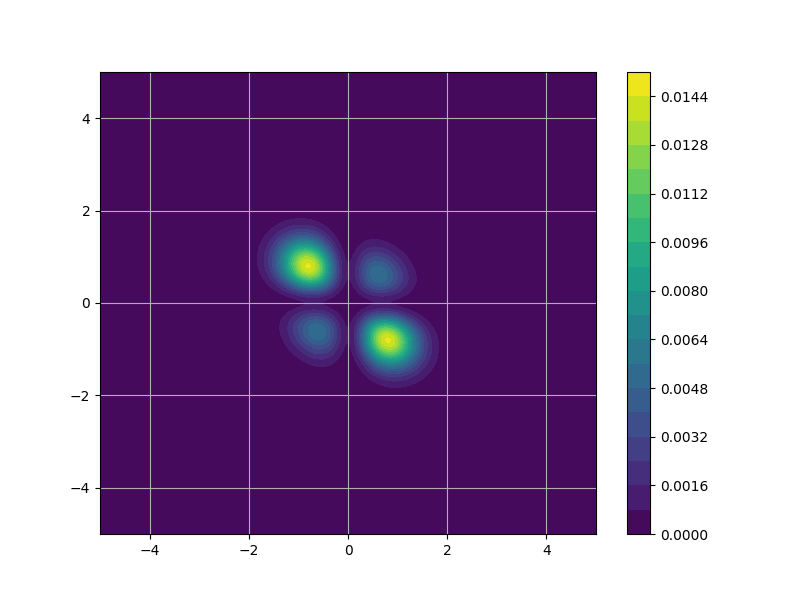} 
        \caption{\small{$(x_1, x_2, 0, 0, 0, 0)$}}
    \end{subfigure}
    \hspace{0.04\linewidth}
    \begin{subfigure}[b]{0.45\linewidth}
        \includegraphics[width=\linewidth]{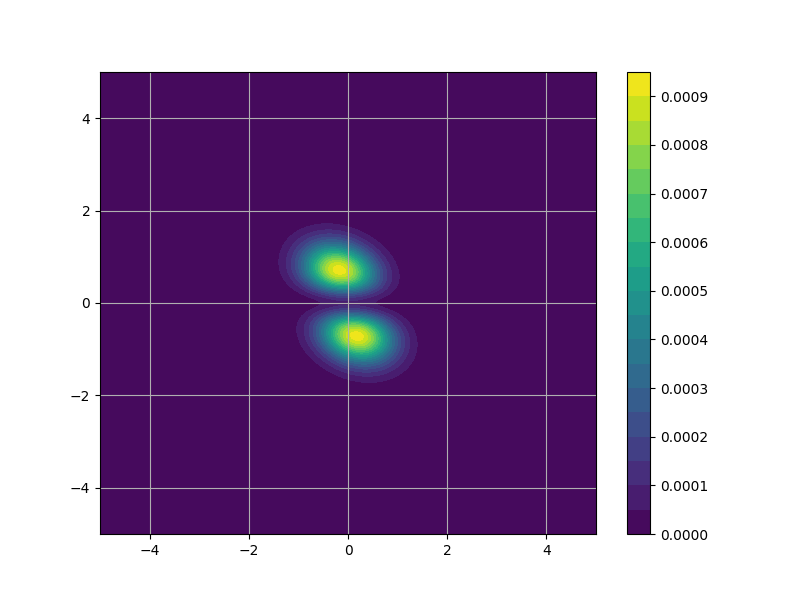} 
        \caption{\small{Example 4.4: $(x_1, 0, x_3, 0, 0, 0)$}}
    \end{subfigure}
    \caption{\small{Contour maps of  projections of the numerical solution in Example \ref{exm:6d-multi-mode}}}\label{fig:contour-6d-mutiple-mode-est}
\end{figure}

Next, we refine the numerical support as \zz{using Algorithm \ref{alg:refine-support}}.  With the trained {TRBFN}(800, 6), we obtain $\bigotimes_{j=1}^6[O_j - 4.4, O_j + 4.4]$ such that the integral of the TRBFN over the region is larger than the threshold $\vartheta = 0.97$.  Then the numerical support is refined to be  
$\bigotimes_{j=1}^6[O_j - 4.4, O_j + 4.4]$. 
We apply {TRBFN}(800, 6) on the refined support. In Table  \ref{tbl:small-support-6d-multiple-mode}, we report the average relative error, which is less than $5\%$ in the region $\Gamma_{10^{-3}}$ and is less than $10\%$ in   $\Gamma_{2\times 10^{-4}}$\zz{, where $\Gamma=[-2,2]^6$}.

\begin{table}[!ht]
    \centering
    \begin{tabular}{lcccc} 
        \toprule
        & \multicolumn{3}{c}{density region/error } \\ 
        \cmidrule{5-5} 
         model &  $\Gamma_{2\times 10^{-4}}$ & $\Gamma_{  10^{-3}}$ & $\Gamma_{5\times 10^{-3}}$ \\ 
        \midrule
         TRBFN(800, 6)  & 0.0867 & 0.0560 & 0.0365 \\ 
         $\#$  of  test points $n$ & 91712 &     26334 &      2121 \\
        \bottomrule
    \end{tabular}
    \caption{\small{Example \ref{exm:6d-multi-mode}: average relative errors \eqref{eq:defn-relative-error} for TRBFN(800, 6) trained on $\bigotimes_{j=1}^6[O_j - 4.4, O_j + 4.4]$. All $5\times10^5$ test points $x_i$ are sampled uniformly from $\Gamma = [-2, 2]^6$.}  The number of parameters of TRBFN(800, 3) is 44000(est.)}
    \label{tbl:small-support-6d-multiple-mode}
\end{table}

\begin{exm}[\zz{10D, solution with two modes}]
\label{exm:10d-multi-mode}
Consider the drift 
\eqref{eq:processes} with the following 
potential function and the diffusion
\begin{align*}
  H &= 2.5 (x_1^2 + x_2^2 + x_3^2 + 0.1 (x_1 x_2 + x_1 x_3 + x_2 x_3))
 + 2.0 (x_4^2 + x_5^2 + x_6^2 + 0.2(x_4 x_5 + x_4 x_6 + x_5 x_6) )\\
& + 3.0 (x_7^2 + x_8^2 - 0.01(x_7 x_8) ) + 3.0 (x_9^2 + x_{10}^2 - 0.01(x_9 x_{10}) ) - \ln(2 x_9^2 + 0.02), \quad D = 2 I_{10 \times 10}.
\end{align*}
In this example, the variables $(x_1, x_2, x_3), (x_4, x_5, x_6),
(x_7, x_8)$ and $(x_9, x_{10})$ have distinct interactions and the exact solution has two modes. The density function is highly concentrated around two modes.
\end{exm}

We test the capability of TRBFN to solve this 10D problem on different bounded domains $\Omega$. \zz{In this example, all TRBFNs only contain Wendland radial basis functions.}
\zz{We can not achieve any meaningful results with the bounded domain $\bigotimes_{j=1}^{10} [O_j - B, O_j + B]$, $O$ and $B = 2.8580$, which is obtained using Algorithm \ref{alg:obtain-bounded-domain}}. The relative error is over $99.99\%$.
\rone{To obtain a smaller bounded domain $\Omega$, we choose anisotropic half-edge length, i.e., $ \bigotimes_{j=1}^{10}[O - r_j, O + r_j]^{10}$ and 
$r_j = \max(O_j - z_{ij})$,
%
where $z_i$ are generated from simulated SDE trajectories from Algorithm \ref{alg:obtain-bounded-domain},  and $j$ denotes the $j$-th dimension. We obtain that $r_1 = 2.2911, r_2 = 2.1985, r_3 = 2.17, r_4 = 2.4365, r_5 = 2.622, r_6 = 2.3835, r_7 = 2.2343, r_8 = 1.875, r_9 = 2.1955, r_{10} = 2.0016$.} We test TRBFN(800, 3) on $\bigotimes_{j = 1}^{10} [O_j - r_j, O_j + r_j]$.  We report the result in Table \ref{tab:10D-rj-omega} and present the contour map of projections of the numerical solution and accurate solutions in Figure \ref{fig:contour-10d-mutiple-mode-est}.

\begin{table}[!ht]
    \centering
    \begin{tabular}{lccccc} 
        \toprule
        & & & \multicolumn{3}{c}{density region/error } \\ 
        \cmidrule{4-6} 
         model &   bounded domain & L2 error & $\Gamma_{1\times 10^{-3}}$  & $\Gamma_{1\times 10^{-2}}$ & $\Gamma_{1\times 10^{-1}}$ \\ 
        \midrule
          {TFFN}(800, 3) & $[O_j - r_j, O_j + r_j]^{10}$  & $8.8\times 10^{-3}$ & 0.7608 & 0.7657  &  0.7692  \\ 
         \midrule
           $\#$  of  test points $n$ & & & 390830 & 96024 & 211    \\
        \bottomrule
    \end{tabular}
    \caption{\small{
     Example \ref{exm:10d-multi-mode}: average relative errors \eqref{eq:defn-relative-error} for different tensor neural networks. All $5 \times 10^5$ test points $x_i$ are sampled uniformly from $\Gamma = [-0.7, 0.7]^{10}$.
    }}
    \label{tab:10D-rj-omega}
\end{table}

\begin{figure}[!ht]
    \centering
    \begin{subfigure}[b]{0.35\linewidth}
        \includegraphics[width=\linewidth]{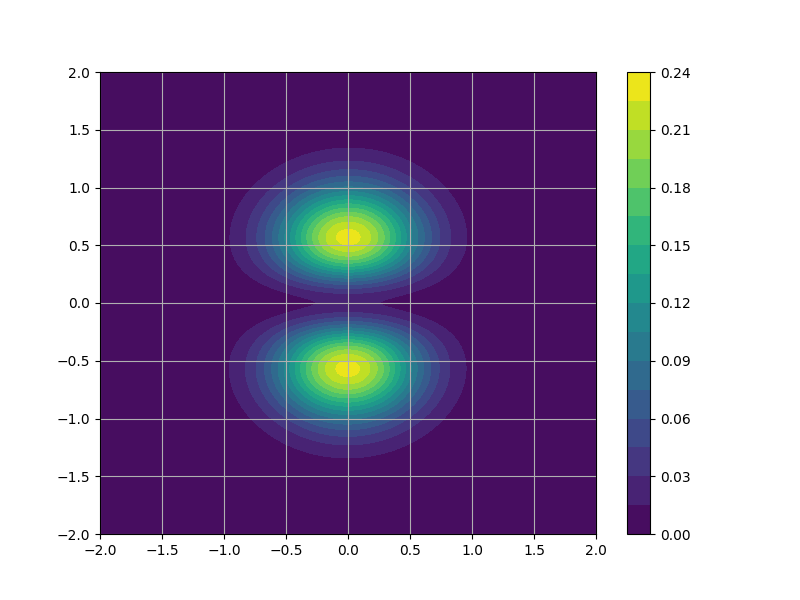} 
        \caption{accurate solution projected to \\ \small{$(0, 0, 0, 0, 0, 0, 0, 0, x_9, x_{10})$}}
    \end{subfigure}
    \hspace{0.04\linewidth}
    \begin{subfigure}[b]{0.35\linewidth}
        \includegraphics[width=\linewidth]{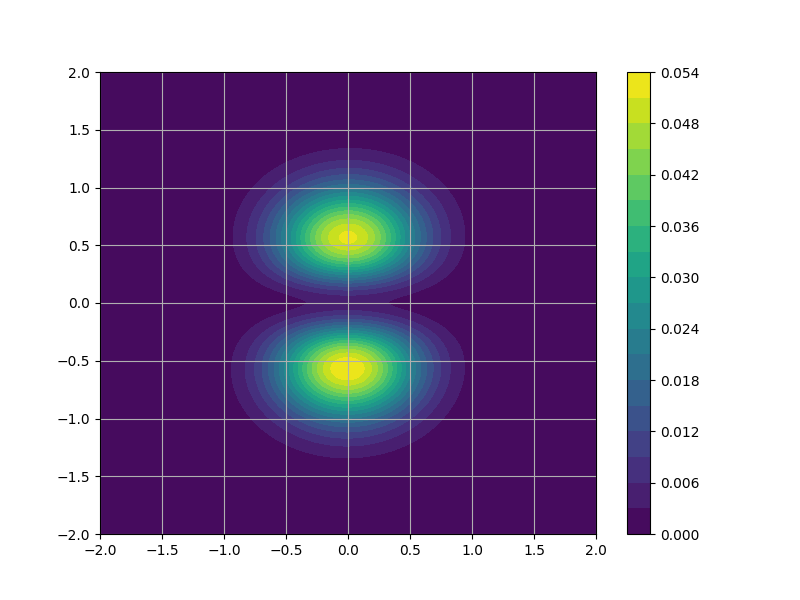} 
        \caption{numerical solution projected to\\ \small{$(0, 0, 0, 0, 0, 0, 0, 0, x_9, x_{10})$}}
    \end{subfigure}
    \caption{\small{Contour maps of projections of the numerical solution and accurate solution in Example \ref{exm:10d-multi-mode}}}\label{fig:contour-10d-mutiple-mode-est}
\end{figure}

While the relative errors are around $75\%$ for different density regions, the contours of the projection demonstrated that the TRBFN captures the modes of the accurate solution. We compute integrations of the trained TRBFN on the $\bigotimes_{j = 1}^{10}[O_j - r, O_j + r]$ where $r$ takes values in grids $\{0.3, 0.6, 0.9, 1.2, 1.5, 1.8, 2.1, 2.4\}$. We report the integration of the trained TRBFN in Table \ref{tab:tenD-integration}.

\begin{table}[ht]
\centering
\begin{tabular}{c|c|c|c|c|c|c|c|c}
\hline
\textbf{Model} & $r = 0.3$ & $r = 0.6$ & $r = 0.9$ & $r = 1.2$ & $r = 1.5$ & $r = 1.8$ & $r = 2.1$ & $r = 2.4$ \\ 
\hline
TRBFN(800, 3) &0.00 & 0.017  & 0.132 & 0.277& 0.476 & 0.784 & 0.967 & 0.999  \\
\hline
\end{tabular}
\caption{Example \ref{exm:10d-multi-mode}:  integration of  trained TRBFN(800, 3) over numerical supports with different half-edge length $r$.}
\label{tab:tenD-integration}
\end{table}

We also test TRBFN on the bounded domain $\bigotimes_{j=1}^{10}[O_j - 2.1, O_j + 2.1]$ and $\bigotimes_{j=1}^{10}[O_j - 1.8, O_j + 1.8]$ \zz{and report relative errors}  in Table \ref{tab:10D-1_8_2_1-results}. We observe that TRBFN trained on $\bigotimes_{j=1}^{10}[O_j - 2.1, O_j + 2.1]$ achieves relative errors less than $50\%$ on different density regions.  TRBFN trained on $\bigotimes_{j=1}^{10}[O_j - 1.8, O_j + 1.8]$ achieves relative errors less than $5\%$ on different density region. This demonstrates the capability of the \zz{TRBFN} in solving 10D Fokker-Planck equation. 
 However, we need to find a smaller bounded domain $\Omega$ where the most probability mass is concentrated on. This will be a future research topic.
 
\begin{table}[!ht]
    \centering
    \begin{tabular}{lccccc} 
        \toprule
        & & & \multicolumn{3}{c}{density region/error } \\ 
        \cmidrule{4-6} 
         model &   numerical support & L2 error & $\Gamma_{1\times 10^{-3}}$  & $\Gamma_{1\times 10^{-2}}$ & $\Gamma_{1\times 10^{-1}}$ \\ 
        \midrule
          {TFFN}(800, 3) & $\bigotimes_{j=1}^{10}[O_j - 2.1, O_j + 2.1]$  & $5.7\times 10^{-3}$ & 0.4831 & 0.4922 & 0.4904  \\ 
          {TFFN}(800, 3) & $\bigotimes_{j=1}^{10}[O_j - 1.8, O_j + 1.8]$  & $5.4\times 10^{-4}$ & 0.0422 & 0.0426 & 0.0472  \\ 
         \midrule
           $\#$  of  test points $n$ & & & 390830 & 96024 & 211    \\
        \bottomrule
    \end{tabular}
    \caption{\small{
     Example \ref{exm:10d-multi-mode}: average relative errors \eqref{eq:defn-relative-error} for different tensor neural networks. All $5 \times 10^5$ test points $x_i$ are sampled uniformly from $\Gamma = [-0.7, 0.7]^{10}$.
    }}
    \label{tab:10D-1_8_2_1-results}
\end{table}

\section{Comparison and ablation studies}\label{sec:ablation}
\rone{
In this section, we perform a comparison study with the state-of-the-art method for solving Equation \eqref{eq:fokker-planck-steady} and ablation studies on different choices of training optimizers, boundary conditions, choices of radial basis functions of TRBFN, and computation precision. 

\subsection{Comparison with a data-driven method}
\rone{
We compare our method with the data-driven method from  \cite{zhai2022DL4FP} for Example \ref{exm:6d-unimode}. 
The data-driven method in \cite{zhai2022DL4FP} uses feedforward neural networks to approximate the solution to the Fokker-Planck equation \eqref{eq:fokker-planck-steady}. 
The loss function therein is similar to \eqref{eqn: Loss}, but the training points for $x_i$ are generated from the corresponding SDE \eqref{eqn:fokker-planck-sde} (see Algorithm 2 in \cite{zhai2022DL4FP}). Also, an extra loss term is added - the mean-square difference between the reference solution and the feed-forward neural network solution at reference points. The reference solutions are Monte Carlo approximations from numerical simulations at reference points (see details in \cite{zhai2022DL4FP}).

}
\begin{table}[!ht]
    \centering
    \begin{tabular}{lccccc} 
        \toprule
        & & \multicolumn{3}{c}{density region/error } \\ 
        \cmidrule{2-4} 
         model &   $\Gamma_{5\times 10^{-2}}$  & $\Gamma_{2.5\times 10^{-1}}$ & $\Gamma_{5\times 10^{-1}}$ \\ 
        \midrule
          {TRBFN}(800, 3) & 0.0891 & 0.0543 & 0.0427  \\ 
         {TFFN}(20, [1, 32, 32, 32, 1]) & 0.1001  &  0.0602  &  0.0479 \\ 
         {NN data driven}  &  0.1166 & 0.0694 & 0.0788 \\ 
         \midrule
           $\#$  of  test points $n$ & 34705 &     7796 &    1926    \\
        \bottomrule
    \end{tabular}
    \caption{\small{{ 
    Comparison with data-driven methods in \cite{zhai2022DL4FP} for Example \ref{exm:6d-unimode}: average relative errors \eqref{eq:defn-relative-error} for different tensor neural networks. All $5 \times 10^5$ test points $x_i$ are sampled uniformly from $\Gamma = [-1, 1]^6$.}
    }}
    \label{tab:compared_data_driven}
\end{table}

For the method from \cite{zhai2022DL4FP}, we run $2.5 \times 10^6$ steps of SDE using the Euler-Maruyama method with $4 \times 10^5$ trajectories and a step size $0.001$. We obtain $4 \times 10^5$ points at the terminal time and use them as both training and reference points. 
%
The feedforward neural network consists of four hidden layers, each with 128 neurons, and has an input dimension of 6 and an output dimension of 1.
We train this neural network using Algorithm 1 in \cite{zhai2022DL4FP} and we use the same number of epochs for our method and the method from \cite{zhai2022DL4FP}.

We report the results in Table \ref{tab:compared_data_driven}. 
TRBFN is trained over $\bigotimes_{j=1}^{6}[O_j - 1.5191, O_j + 1.5191]$ and TFFN is trained over $\bigotimes_{j=1}^{6}[O_j - 1.2, O_j + 1.2]$. 
We observe in the table that both TNNs perform similarly to the neural network in the data-driven method. Unlike the data-driven method in \cite{zhai2022DL4FP}, our method does not require using SDE simulations during the training.
}

\subsection{\color{black}Using different optimization methods}
{\color{black}
We test different optimization methods in Example \ref{exm:2d-ring}.

First, we test ADAM and LION in Example \ref{exm:2d-ring}, using TFFN.
We test both ADAM and LION with the same learning rate schedule: starting with a learning rate $10^{-3}$, decaying to $8\times 10^{-6}$ using a polynomial schedule. 
Both are trained with $10^5$ epochs and a batch size $4096$. 
We report the average relative errors in Table \ref{abl-optimizer-TFFN}. 
\begin{table}[!ht]
    \centering
    \begin{tabular}{lccccc} 
        \toprule
        & & \multicolumn{3}{c}{density region/error } \\ 
        \cmidrule{3-5} 
         model &   Optimizer & $\Gamma_{1\times 10^{-2}}$  & $\Gamma_{5\times 10^{-2}}$ & $\Gamma_{1\times 10^{-1}}$ \\ 
        \midrule
          {TFFN}(128, [1 8 8 8 1]) & ADAM   & 0.0033 & 0.0025 & 0.0023  \\ 
         {TFFN}(128, [1 8 8 8 1])  & LION  & 0.0012 &   0.0008  & 0.0007 \\ 
         \midrule
           $\#$  of  test points $n$ & & 44495 & 35392 & 26884    \\
        \bottomrule
    \end{tabular}
    \caption{\small{
    Ablation study: Optimization methods with TFFN for Example \ref{exm:2d-ring}: average relative errors \eqref{eq:defn-relative-error}. All $5 \times 10^5$ test points $x_i$ are sampled uniformly from $\Gamma = [-2, 2]^2$.
    }}
    \label{abl-optimizer-TFFN}
\end{table}

Second, we test TRBFN using four different optimization methods:
LION, ADAM, and SGD, and a two-step method from \cite{JMLR:v12:gonen11a} for multiple kernel learning. 
For LION, ADAM and SGD, we use the same polynomial learning rate schedule: starting from $9 \times 10^{-4}$ to  $8\times 10^{-6}$. The batch size is $5000$ and we train  $10^5$ epochs. 
In the two-step method in \cite{JMLR:v12:gonen11a}, the combination parameters $c_i$, $\alpha_{ij}^{(\ell)}$ in \eqref{eq:KDE Estimator} 
and \eqref{eq:additive-kernel-1d} and the base learner parameters $s_{ij}^{(\ell)}$ and $h_{ij}^{(\ell)}$ in \eqref{eq:additive-kernel-1d} are updated alternately during the iterations. For each update, we use LION, for both combination and base learner parameters. For the two-step method,
we first update combination parameters for $100$ epochs and then update base learner parameters for $100$ epochs and repeat this process until we reach $2 \times 10^5$ epochs. The batch size is $5000$. In each step, we use the same polynomial learning rate schedule decaying from $9\times 10^{-4}$ to $8\times 10^{-6}$.

We test TRBFN(1000, 3)  with 
each $k_{ij}$  being Wendland radial basis functions using 
these four optimization methods. 
We report the results in Table \ref{tab:TRBFN-opt} and plot the evolution of the loss function for LION, ADAM and SGD in Figure \ref{fig:loss-opt}. The results in Figure \ref{fig:loss-opt} demonstrate that LION minimizes the loss most effectively. This suggests that training TRBFN with LION  may achieve the best accuracy.

\begin{table}[!ht]
    \centering
    \begin{tabular}{lccccc} 
        \toprule
        & & & \multicolumn{3}{c}{density region/error } \\ 
        \cmidrule{4-6} 
         model &   Optimizer & L2 error & $\Gamma_{1 \times 10^{-2}}$ & $\Gamma_{5 \times 10^{-2}}$& $\Gamma_{1 \times 10^{-1}}$ \\ 
        \midrule
          {TRBFN}(1000, 3) & LION & $8\times 10^{-6}$ & 0.0001 & 0.0001 & 0.0001  \\ 
         {TRBFN}(1000, 3)  & ADAM & $1.3\times 10^{-2}$ & 0.1140 & 0.1085 & 0.1099 \\ 
         {TRBFN}(1000, 3)  & SGD & $1.3\times 10^{-2}$ & 0.1317 & 0.1157 & 0.1138 \\ 
         {TRBFN}(1000, 3)  & Two-step (with LION) & $2.3\times 10^{-5}$ & 0.0002 & 0.0002 & 0.0002 \\ 
         \midrule
           $\#$  of  test points $n$ & & & 44495 & 35392 & 26884    \\
        \bottomrule
    \end{tabular}
    \caption{\small{
    Ablation study: Optimization methods with TRBFN for Example \ref{exm:2d-ring}: average relative errors \eqref{eq:defn-relative-error}. All $ 10^5$ test points $x_i$ are sampled uniformly on $\Gamma = [-2, 2]^2$.
    }}
    \label{tab:TRBFN-opt}
\end{table}

\begin{figure}
\centering
\includegraphics[width=0.55\linewidth]{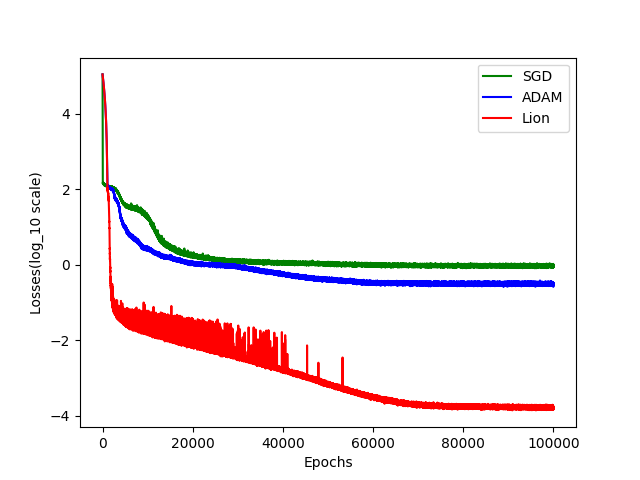}
\caption{Loss Evolution Graph of SGD, ADAM and LION for TRBFN Example \ref{exm:2d-ring}}
\label{fig:loss-opt}
\end{figure}
 }
%
\subsection{\color{black}Enforcing boundary condition or not}
In this subsection, we test the  effect of enforcing boundary conditions for both TFFN and TRBFN, with Example \ref{exm:6d-unimode}.

To enforce the boundary condition for TFFN, we use the vanishing boundary condition with the truncation function \eqref{eqn:trunc}. 
We test TFFN on $\bigotimes_{j=1}^{6} [O_j - 1.2, O_j + 1.2]$ without truncation function for Example \ref{exm:6d-unimode} and we find  $99.99\%$ relative error. This implies the sensitivity in the boundary conditions for TFFN.

For Example \ref{exm:6d-unimode} and bounded domain $\bigotimes_{j=1}^{6} [O_j - 1.5191, O_j + 1.5191 ]$, we test TRBFN(800, 6) which contains
three Wendland radial basis functions (compactly supported) and three inverse quadratic radial basis functions (positive) for each $k_{ij}$. 
In Table \ref{tab:abl-penalty-term}, we report the results from TRBFN with and without the boundary penalty term \eqref{penalty-boundary}:
\begin{equation}\label{penalty-boundary}
    \sum_{i,j}( (k_{ij}(O_j + r)  + k_{ij}(O_j - r) ).
\end{equation}

We observe that TRBFN still works
even without the boundary condition explicitly imposed. We attribute this performance to the specific choices of decaying radial basis functions and the penalization term $ \sum_{i,j,\ell}\left(\max(|s_{ij}^{(\ell)} - O_j| - r, 0 ) + \max(|h_{ij}^{(\ell)}| -  (|r - |s_{ij}^{(\ell)} - O_j|\,|), 0)\right)$.   However,  the accuracy of TRBFN can be improved when the boundary condition is imposed via \eqref{penalty-boundary}.
\begin{table}[!ht]
    \centering
    \begin{tabular}{lccccc} 
        \toprule
        & & & \multicolumn{3}{c}{density region/error } \\ 
        \cmidrule{4-6} 
         model &   boundary condition & L2 error & $\Gamma_{5\times 10^{-2}}$  & $\Gamma_{2.5\times 10^{-1}}$ & $\Gamma_{5\times 10^{-1}}$ \\ 
        \midrule
          {TRBFN}(800, 6)(wi) & with \eqref{penalty-boundary}  & $5.4\times 10^{-3}$ & 0.0974 & 0.0599 & 0.0469  \\ 
         {TRBFN}(800, 6)(wi)  & without 
         \eqref{penalty-boundary}  & $8.4\times 10^{-3}$ & 0.1534 & 0.0947 & 0.068 \\ 
         \midrule
           $\#$  of  test points $n$ & & & 34705 &     7796 &    1926    \\
        \bottomrule
    \end{tabular}
    \caption{\small{
    Ablation study: Penalty term of boundary conditions for  TRBFN with Example \ref{exm:6d-unimode}: average relative errors \eqref{eq:defn-relative-error}. All $5 \times 10^5$ test points $x_i$ are sampled uniformly on $\Gamma = [-1, 1]^6$. 
    }}
    \label{tab:abl-penalty-term}
\end{table}

\subsection{Using different RBFs in TRBFN}

In this section, we investigate how the different combinations of radial basis functions affect the performance of TRBFN.
We compare two cases:
Case 1: $k_{ij}$ only contains compactly-supported radial basis functions; Case 2:  $k_{ij}$ contains compactly supported and positive radial basis functions. 
We use Example \ref{exm:6d-unimode} over the  numerical support $\bigotimes_{j=1}^{10}[O_j - 1.5191, O_j + 1.5191]$.

For Case 1, we choose TRBFN with rank 800, and each $k_{ij}$ contains six Wendland's radial basis functions (compactly-supported). 
For Case 2, we use  TRBFN with rank 800 but each $k_{ij}$ contains three Wendland's radial basis functions and three inverse quadratic radial basis functions (positive). 
In Table \ref{tab:abl-rbf_types}, we report the relative average errors and observe higher accuracy in Case 1. This difference can be explained by the fact that $k_{ij}$'s in TRBFN  only contain compactly supported radial basis functions and are zero outside the numerical support.


\begin{table}[!ht]
    \centering
    \begin{tabular}{lccccc} 
        \toprule
        & & & \multicolumn{3}{c}{density region/error } \\ 
        \cmidrule{4-6} 
         model &   RBF types & L2 error & $\Gamma_{5\times 10^{-2}}$  & $\Gamma_{2.5\times 10^{-1}}$ & $\Gamma_{5\times 10^{-1}}$ \\ 
        \midrule
          {TRBFN}(800, 6)  & Case 1 & $2.5\times 10^{-3}$ & 0.0451 & 0.0271 & 0.0216  \\ 
         {TRBFN}(800, 6)  & Case 2 & $5.4\times 10^{-3}$ & 0.0974 & 0.0599 & 0.0469 \\ 
         \midrule
           $\#$  of  test points $n$ & & & 34705 &     7796 &    1926    \\
        \bottomrule
    \end{tabular}
    \caption{\small{
    Ablation study: RBF types of TRBFN with Example \ref{exm:6d-unimode}: average relative errors \eqref{eq:defn-relative-error} for different tensor neural networks. All $5 \times 10^5$ test points $x_i$ are sampled uniformly from $\Gamma = [-1, 1]^6$.
    }}
    \label{tab:abl-rbf_types}
\end{table}

\subsection{\zz{Using different arithmetic precision}}
\rone{
In this section, we explore the effects of different precision. We test TRBFN(5, 720) and TFFN(64,[1 8 8 1]) with Example \ref{exm:2d-ring} on bounded domain $\bigotimes_{j=1}^2[O - 2.1467, O + 2.1467]$ using single and double precision.

\begin{table}[!ht]
    \centering
    \begin{tabular}{lccccc} 
        \toprule
        & & & \multicolumn{3}{c}{density region/error } \\ 
        \cmidrule{4-6} 
         model &   Precision & L2 error & $\Gamma_{1 \times 10^{-2}}$ & $\Gamma_{5 \times 10^{-2}}$& $\Gamma_{1 \times 10^{-1}}$ \\ 
        \midrule
          TFFN(64,[1 8 8 1]) & float 32  & $4.6\times 10^{-4}$ & 0.0044 & 0.0028 & 0.0022  \\ 
         TFFN(64,[1 8 8 1])  & float 64 & $4.0\times 10^{-4}$ & 0.0030 & 0.0021 & 0.0018  \\ 
         TRBFN(5, 720)  & float 32 & $3.4\times 10^{-4}$ & 0.0051 & 0.0029 & 0.0022 \\ 
         TRBFN(5, 720)  & float 64 & $3.4\times 10^{-4}$ & 0.0051 & 0.0029 & 0.0022 \\ 
         \midrule
           $\#$  of  test points $n$ & & & 89164    &   76013    &   54920    \\
        \bottomrule
    \end{tabular}
    \caption{\small{
    Ablation study: Using single and double precison for TRBFN with Example \ref{exm:2d-ring}: average relative errors \eqref{eq:defn-relative-error} for different tensor neural networks. All $ 10^5$ test points $x_i$ are sampled uniformly from $\Gamma = [-2, 2]^2$.
    }}
    \label{tab:abl-precision}
\end{table}

For TRBFN, the number of epochs is $10^5$ and batch size is $5000$. For TFFN, the number of epochs is $10^5$ and the batch size is $4096$. 
We keep all the hyperparameters and training strategies as in Example \ref{exm:2d-ring} and 
use two precision: single (float 32) and double (float 64). 
From the results in Table \ref{tab:abl-precision}, we observe that the relative error of TFFN from double precision is smaller than that of TFFN with single precision. 
In contrast, the TRBFN achieves the same relative error under both precisions.
The difference of relative error in TFFN may be
caused by the numerical integration error when  $Z(\Theta)$  in  Equation \eqref{eq:estimator-integral} is computed while   $Z(\Theta)$ is computed analytically when TRBFN is used.
}

%
\section{Summary and discussion}\label{sec:summary}
We have investigated tensor  radial basis function networks and tensor-feedforward neural networks for  steady-state Fokker-Plank equations in high dimensions. 
We first designed a simple procedure for estimating efficient numerical supports and then applied these two types of networks for 
the equations over the numerical supports. 
For tensor radial basis functions, 
we imposed conditions to control the parameters in each radial basis function for better approximations.  
The radial basis functions allowed us to find the exact derivatives and integration over numerical support and thus significantly reduce storage requirements from auto-differentiation in major Python packages. 
%
The tensor feedforward networks are straightforward to implement but are more sensitive to numerical supports even in six or less dimensions.  
We presented several examples  \rone{as well as various ablation studies}
to illustrate the feasibility and accuracy of both tensor networks.

There are also some limitations of the current methodology. First, we use a  hypercube as numerical support while for Fokker-Planck equations the numerical support may be large when the density function decay slowly or contains separated regions.  Also, the efficiency is sensitive to the choice of numerical support as it may require more training points in a  large support. Second, the choice of the kernels is somewhat arbitrary.
More kernel functions of different kind may be employed for larger approximation capacity; see e.g.  \cite{BouOwh21} and consequent works.   Third, we imposed some constraints on the parameters in radial basis functions, which are general but still are problem-dependent. An alternative approach is to use 
an alternating direction method in training these networks and apply a cross-validation strategy to determine when to stop computation, such as in \cite{Akian22learningbestkernel,owhadi2019kernel,yang2024learning}. \rone{Fourth, the approximation error estimates of the tensor networks and efficiency of the tensor neural networks require theoretical study.} We leave these issues for future study.

The methodology in this work may be combined with DeepBSDE or other methods for better accuracy, where an accurate solution is desired at a few points in the high-probability regions. For example, we may run tensor neural networks to obtain an estimate of high probability region and then compute the solution at the points of interest with DeepBSDE. 

\vskip 10pt 
\section*{Declaration of competing interest}

The authors declare that they have no known competing financial interests or personal relationships that could have
appeared to influence the work reported in this paper.

\vskip 10pt 
\section*{Data availability}

The code and data generated during and/or analysed during the current study will be available upon publication.

\section*{Acknowledgement}
We would like to thank Professor Houman Ohwadi for helpful discussions and for providing valuable references. 
ZZ was partially supported by AFOSR under award number FA9550-20-1-0056. 
GK acknowledges support by the DOE SEA-CROGS project (DE-SC0023191) and the MURI-AFOSR project (FA9550-20-1-0358), and the ONR Vannevar Bush Faculty Fellowship (N00014-22-1-2795). 
This research of ZH and KK is partially supported by the National Research Foundation Singapore under the AI Singapore Programme (AISG Award No: AISG2-TC-2023-010-SGIL) and the  Tier 1 (Award No: T1 251RES2207).
\zz{
The authors are grateful for the reviewers'  comments and suggestions, which helped us in improving the quality of the manuscript. }
\bibliographystyle{elsarticle-num-names}
\bibliography{density,kernel,highdpinn,nntraining,fokkerplanck}
\appendix

\section{Universal approximation of tensor neural networks} \label{app:universal}

We use a similar argument in \cite{nguyen2022fourierformer} to show the universal approximation property of tensor neural networks. 
Recall from the Fourier integral theorem, for any $p(x)\in L^1(\Real^d)$, 
\begin{equation*}
    p(x) = \frac{1}{(2\pi)^d} \int_{\Real^d} \int_{\Real^d} \cos(z^\top (x-y)) p(y)\,dz\,dy
\end{equation*}
Observe that 
\begin{equation*}
\int_{\Real^d} \cos(z^\top (x-y))  \,dz
=\lim_{R\to \infty }   \int_{[-R,R]^d} \cos(z^\top (x-y)) \,dz 
     = \lim_{R\to \infty }\bigotimes_{j = 1}^{d} \frac{\sin(R(x_{j} - y_{j}))}{(x_{j} - y_{j})}. 
\end{equation*}
We then approximate  the integral 
\begin{equation*}
    p(x) = \lim_{R\to \infty }\frac{1}{(2\pi)^d} \int_{[-R,R]^d} \cK_R(x-y)p(y) \,dy,\quad \cK_R(x-y) = \bigotimes_{j = 1}^{d} \frac{\sin(R(x_{j} - y_{j}))}{(x_{j} - y_{j})} = \prod_{j = 1}^{d}  K_{R,j}(x_j-y_j).
    \end{equation*} 
Specifically, we have the error $\varepsilon_R $ from approximating the whole space with $[-R, R]^d$ and the error $\varepsilon_{\rm int}$ of numerical integration over this bounded domain. 
\begin{equation*}
    p(x) = \int_{[-R,R]^d} \cK_R(x-y)p(y) \,dy +\varepsilon_R  =
 \sum_{\abs{\alpha}=1}^N c_\alpha \prod_{j = 1}^{d}  K_{R,j}(x_{j}-y_{j,\alpha_i})+ \varepsilon_{\text{int}}  +\varepsilon_R.
\end{equation*}
Here $y_{j,\alpha_i}$ are sampling points  along the direction $y_j$ and $c_\alpha$ depends on $p(y)$ and $y_{j,\alpha_i}$.
The numerical integration can be realized by  the Monte Carlo method
when   $p(y)$ is continuous on $[-R, R]^d$ and has second-order moments. 
This shows a universal approximation of the tensor-product kernels. Observe that the kernel in each dimension is continuous and thus can be approximated by another kernel function or neural network with universal approximation property in each dimension. 

\section{Stability of the least-square formulation}
According to the theory in \cite{shin2020error,mishra2020estimates}, it requires the stability of the exact solution for the error estimates of the neural network solutions for physics-informed neural networks (least-square collocation formulations). In the following, we will focus on the stability of the solution. 

Let's consider 
the following auxiliary problem.
   \begin{equation}\label{eq:fokker-planck-steady-rhs}
\mathcal{L}q= - \sum_{i}\frac{\partial }{\partial x_i}\left( f_i q \right)(x) + \frac{1}{2} \sum_{i, j} \frac{\partial^2 }{\partial x_i \partial x_j} \left( D_{ij} q \right)(x)=g(x),\quad x \in  \Omega,
\end{equation}
with $\displaystyle \int_{\Omega} q(x) dx = 1, q\geq 0$ and $q(x)=0$ on $\partial\Omega$.
When the solution to \eqref{eq:fokker-planck-steady} $p$ vanishes outside $\Omega$, it satisfies the above equation when $g=0$. 

We assume that there exists a unique solution to this problem. For discussions about the uniqueness, we refer to the book
\cite{bogachev2022fokker}. 
Moreover, we assume that the matrix $D$ is positive definite and 
\begin{equation}\label{eq:uniform-ellipticity}
 \sum_{i,j}D_{i,j}\xi_i\xi_j\geq C_{{D}}\abs{\bm \xi}^2, \text{ for all } \bm\xi \in \Real^d.   
\end{equation}
With additional assumptions on $\mathbf{f}$, we can obtain the stability of the solution. Let $\mathbf{b}(x) \in \Real^d$, where $b_i= \frac{1}{2}\sum_{j=1}^d\partial_{x_j}D_{i,j} -f_i$. 
Let $C_P$ depending on the volume of $\Omega$ be the constant from the Poincare inequality: 
\begin{equation*}
  \int_\Omega q^2\,dx\leq C_P \int_{\Omega} (\nabla q)^2\,dx,\quad q(x)=0 \text{ for } x \in \partial\Omega.
\end{equation*} 

\begin{thm}\label{thm:stabilty-fpe}
Let $C_P$ be the constant from the Poincare inequality.   Assume that \begin{equation}
        \frac{C_D}{C_P}-\nabla \cdot \mathbf{b}\geq C_0>0.
    \end{equation}
    where  $C_D$ is from \eqref{eq:uniform-ellipticity}, $\mathbf{b}$ is defined above and $C_0$ is a positive constant. 
    Then we have 
    \begin{equation}\label{eq:stability-fpe}
        C_0^2\int_{\Omega}q^2\,dx \leq 
        4 \int_{\Omega} (\mathcal{L}q)^2\,dx.  
    \end{equation}
  
\end{thm}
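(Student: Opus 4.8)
The plan is to establish a coercivity (G\aa{}rding-type) estimate for $\mathcal{L}$ by testing the equation against $q$ itself, and then to convert that estimate into the stated bound via Cauchy--Schwarz. First I would rewrite the operator in divergence form. Using the definition $b_i = \frac{1}{2}\sum_j \partial_{x_j}D_{ij} - f_i$, a direct expansion of the two terms defining $\mathcal{L}q$ collapses into
\begin{equation*}
\mathcal{L}q = \nabla\cdot(\mathbf{b}\, q) + \tfrac{1}{2}\nabla\cdot(D\nabla q),
\end{equation*}
which cleanly separates the symmetric second-order part from the first-order drift and is precisely why $\mathbf{b}$ is defined the way it is.

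Second, I would multiply this identity by $q$, integrate over $\Omega$, and integrate by parts, using $q=0$ on $\partial\Omega$ to annihilate all boundary terms. The diffusion term yields $-\frac{1}{2}\int_\Omega \nabla q^\top D\nabla q\,dx$, which by the uniform ellipticity \eqref{eq:uniform-ellipticity} is bounded above by $-\frac{C_D}{2}\int_\Omega |\nabla q|^2\,dx$. For the drift term, the key observation is the symmetrization $q\,(\mathbf{b}\cdot\nabla q)=\frac{1}{2}\mathbf{b}\cdot\nabla(q^2)$; integrating by parts once more turns it into $\frac{1}{2}\int_\Omega (\nabla\cdot\mathbf{b})\,q^2\,dx$. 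Collecting both contributions gives
\begin{equation*}
-\int_\Omega (\mathcal{L}q)\,q\,dx = \tfrac{1}{2}\int_\Omega \nabla q^\top D\nabla q\,dx - \tfrac{1}{2}\int_\Omega (\nabla\cdot\mathbf{b})\,q^2\,dx.
\end{equation*}

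Third, I would invoke the Poincar\'e inequality to bound $\int_\Omega |\nabla q|^2\,dx$ below by $\frac{1}{C_P}\int_\Omega q^2\,dx$, so the right-hand side dominates $\frac{1}{2}\int_\Omega\big(\frac{C_D}{C_P}-\nabla\cdot\mathbf{b}\big)q^2\,dx$, which by hypothesis is at least $\frac{C_0}{2}\int_\Omega q^2\,dx$. This produces the coercivity estimate $-\int_\Omega (\mathcal{L}q)\,q\,dx \ge \frac{C_0}{2}\|q\|_{L^2(\Omega)}^2$. Finally, bounding the same left-hand side from above by Cauchy--Schwarz, $-\int_\Omega (\mathcal{L}q)\,q\,dx \le \|\mathcal{L}q\|_{L^2}\,\|q\|_{L^2}$, and cancelling one factor of $\|q\|_{L^2}$ (the case $q\equiv 0$ being trivial) yields $\frac{C_0}{2}\|q\|_{L^2}\le \|\mathcal{L}q\|_{L^2}$; squaring gives exactly \eqref{eq:stability-fpe}.

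I expect the main obstacle to be not the algebra but justifying the manipulations rigorously: the integration-by-parts steps and the divergence-form rewriting require enough regularity on $q$, $D$, and $\mathbf{b}$ (in particular integrability of $\nabla\cdot\mathbf{b}$) for the boundary terms to genuinely vanish, and in a weak/variational setting one would instead interpret every integral in the $H_0^1(\Omega)$ duality pairing. A secondary, easily-botched point is the sign bookkeeping in the drift step, since it is precisely the symmetrization $q\,\mathbf{b}\cdot\nabla q=\frac{1}{2}\mathbf{b}\cdot\nabla(q^2)$ that converts an apparently indefinite convective term into the controllable quantity $\frac{1}{2}(\nabla\cdot\mathbf{b})q^2$.
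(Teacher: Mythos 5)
Your proposal is correct and follows essentially the same route as the paper's proof: test against $q$, integrate by parts to reach the energy identity $-\int_\Omega (\mathcal{L}q)\,q\,dx = \tfrac12\int_\Omega \nabla q^\top D\nabla q\,dx - \tfrac12\int_\Omega(\nabla\cdot\mathbf{b})q^2\,dx$, then apply uniform ellipticity, Poincar\'e, and Cauchy--Schwarz to obtain \eqref{eq:stability-fpe}. Your preliminary rewriting of $\mathcal{L}$ in divergence form is just a tidier organization of the paper's ``integration-by-parts several times'' step (and keeps the factor of $\tfrac12$ unambiguous), but the argument is the same.
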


\begin{proof}
Multiplying $-q$ and integrating over both sides of Equation \eqref{eq:fokker-planck-steady-rhs}, we have  $
   - \int_\Omega\mathcal{L}q q = \int_{\Omega} -q(x) g(x).$
Applying integration-by-parts several times gives 
\begin{align}\label{eq:int-by-parts}
   \frac{1}{2}\int_{\Omega}\sum_{i,j} 
D_{i,j}\partial_{x_i}q \partial_{x_j}q -  (\nabla \cdot \mathbf{b})  q^2\,dx = \int_{\Omega} -q(x) g(x).
\end{align}
where $b_i= \frac{1}{2}\sum_{j=1}^d\partial_{x_j}D_{i,j} -f_i$.  
Applying the uniform ellipticity 
\eqref{eq:uniform-ellipticity} and   the Poincare  inequality, we have   
\begin{align*}
    \int_\Omega q^2 (\frac{C_D}{C_P}-\nabla \cdot \mathbf{b})\,dx &\leq 
     \int_\Omega C(\nabla q)^2 -q^2\nabla \cdot \mathbf{b})\,dx\\
     &\leq \int_{\Omega}\sum_{i,j} 
D_{i,j}\partial_{x_i}q \partial_{x_j}q -  (\nabla \cdot \mathbf{b})  q^2\,dx.
    \end{align*}
    Then by Equation \eqref{eq:fokker-planck-steady-rhs} and \eqref{eq:int-by-parts}, we have 
    \begin{align*}
       \int_\Omega q^2 (\frac{C}{C_P}-\nabla \cdot \mathbf{b})\,dx &\leq  -2\int_{\Omega} g(x)q\,dx =   -2\int_{\Omega}\sum_{i,j} 
\mathcal{L}q q\,dx.
    \end{align*}
    Then by the Cauchy-Schwarz inequality, we obtain the desired stability \eqref{eq:stability-fpe}. 
 %
\end{proof}

    %

     By  the analysis of least-square collocation methods using networks \cite{shin2020error,mishra2020estimates} 
     and Theorem \ref{thm:stabilty-fpe}, we obtain   
     approximation errors from the proposed methodology with loss \eqref{eqn: Loss}.  
     Specifically,     when the solution $p$ to Equation \eqref{eq:fokker-planck-steady} vanishes outside  $\Omega$, we may use the linearity of the operator and 
     Theorem \ref{thm:stabilty-fpe}, 
     \begin{equation*}
      C_0^2\int_{\Omega}(p-p_{N})^2\,dx \leq 
          4\int_{\Omega} (\mathcal{L}p-p_{N})^2\,dx.
        \end{equation*}
      For the last term in the above inequality, we use 
$M_T$ Monte Carlo  points
of uniform distribution to approximate the integral over $\Omega$. Suppose that  the Monte Carlo integration error for 
     $\displaystyle\int_{\Omega} (\mathcal{L}p_{N})^2\,dx$ is the   error $\varepsilon_T $.
Then we obtain the following generalization error estimate for  network solutions:
          \begin{equation*}
      C_0^2\int_{\Omega}(p-p_{N})^2\,dx \leq 
          4\int_{\Omega} (\mathcal{L}p-p_{N})^2\,dx
          = 4  \frac{1}{\# \,\text{samples}}\sum_{i=1}^{\# \,\text{samples}}\Big({\mathcal{L}\displaystyle p_N(x_i;\Theta)\Big)}^2
          +4\varepsilon_{\# \,\text{samples}}.
        \end{equation*} 
    The first term in the last inequality is proportional to the loss function \eqref{eqn: Loss} (after training) and $\varepsilon_{\# \,\text{samples}}$ is the Monte Carlo integration error- the difference between the integral and the finite sum. 
     We refer to the detailed analysis in \cite{shin2020error}, where more general linear equations than the Fokker-Planck equations are considered.  

\section{Hyper-parameters in Section \ref{sec:bounded domain} for numerical examples  }\label{sec:append-sde}
Here we report all the hyperparameters used in our examples.

\begin{table}[ht]
\centering
\begin{tabular}{|c|c|}
\hline
\textbf{Parameter}    & \textbf{Values} \\ \hline
Step size ($h$)              & 0.001           \\ \hline
Burn-in time ($t_{burnin}$)  & $1 \times 10^6$     \\ \hline
Terminal time ($t_{terminal}$) & $1.5 \times 10^6$   \\ \hline
Number of trajectories($Q$)     & 10            \\ \hline 
$C$    & 1.1          \\ \hline 
\end{tabular}
\caption{Hyper-parameter values in Algorithm \ref{alg:obtain-bounded-domain} for all test examples.}
\end{table}

\subsection{Hyperparameters in Algorithm 
\ref{alg:refine-support}}\label{sec:append-partition}
Here we report all the hyperparameters we used in  the Algorithm 
\ref{alg:refine-support}. 
In the second rows of Tables \ref{tbl:exm42-refining} -\ref{tbl:exm44-refining}, the integration means $
\int_{\bigotimes_{j=1}^d [O_j - r, O_j + r]} p_N(x) \, dx
$, where $p_N$ is the trained neural network and $r$ is the element in $S_n$.

\begin{table}[ht]
\centering
\begin{tabular}{c|c|c|c|c|c|c|c}
\hline
 $r \in S_n$ &0.4&0.8&1.2&1.6&2.0&2.4& $B= 2.6472$  \\ 
\hline
Integration &0.138 & 0.703  & 0.964 & 0.996& 0.999 & 0.999 & 1.0  \\
\hline
\end{tabular}
\caption{Example \ref{exm:4.2-4dunimode}: elements in $S_n$ and their integration results with the trained TRBFN.}
\label{tbl:exm42-refining}
\end{table}
\begin{table}[!htp]
\centering
\begin{tabular}{c|c|c|c|c|c|c|c|c}
\hline
 $r \in S_n$ &0.2&0.4&0.6&0.8&1.0&1.2&1.4&$B=1.5191$ \\ 
\hline
Integration &0.002 & 0.081  & 0.344 & 0.739 & 0.979 & 0.998 & 0.999 & 1.0  \\
\hline
\end{tabular}
\caption{Example \ref{exm:6d-unimode}: elements in $S_n$ and their integration results with the trained TRBFN.}\label{tbl:exm43-refining}
\end{table}
\vskip -10pt
\begin{table}[!ht]
\centering
\scalebox{0.8}{
\begin{tabular}{c|c|c|c|c|c|c|c|c|c|c|c|c}
\hline
 $r \in S_n$ &1.2&1.6&2.0&2.4&2.8&3.2&3.6& 4.0&4.4&4.8&5.2& $B=5.2872$\\ 
\hline
Integration & 0.227 & 0.494 & 0.661& 0.752 & 0.809 & 0.853 & 0.896 & 0.940 & 0.978 & 0.997 & 0.999 & 1.0  \\
\hline
\end{tabular}
}
\caption{Example \ref{exm:6d-multi-mode}: elements in $S_n$ and their integration results with the trained TRBFN.}
\label{tbl:exm44-refining}
\end{table}


\subsection{Weights for TRBFN}\label{sec:append-Weights}
We report all weights for penalty terms in the loss function \eqref{eqn:trbfn-loss}.
In  Examples \ref{exm:2d-ring}- \ref{exm:6d-multi-mode}, we use $\mathcal{W}_1=
 50000$      and $\mathcal{W}_2= 100$.  
In  Examples \ref{exm:6d-multi-mode}, we use $\mathcal{W}_1=\mathcal{W}_2= 100$.

\section{Training Hyper-parameters}\label{sec:append-training}
We list the batch sizes for all examples in Section \ref{sec:numerical}, for both TFNNs and TRBFNs, in Table \ref{tbl:hyperparamers}. 
\begin{table}[h!]
\centering
\begin{tabular}{lllllllll}
\toprule
\textbf{Example} & \textbf{Method} & \textbf{Batch Size} \\
\midrule
Example \ref{exm:2d-ring} & TFFN & 4096 \\
Example \ref{exm:2d-ring} & TRBFN & 5000  \\
Example \ref{exm:4.2-4dunimode} & TFFN & 4096 \\
Example \ref{exm:4.2-4dunimode} & TRBFN & 5000   \\
Example \ref{exm:6d-unimode} & TFFN & 16000  \\
Example \ref{exm:6d-unimode} & TRBFN & 5000  \\
Example \ref{exm:6d-multi-mode} & TRBFN & 40000 \\
Example \ref{exm:10d-multi-mode} & TRBFN & 10000 \\
\bottomrule
\end{tabular}
\caption{\tw{Training batch sizes for TRBFNs and TFNNs for different examples in 
Section \ref{sec:numerical}.}}
\label{tbl:hyperparamers}
\end{table}


\end{document}